\newtheorem{traceless}{Theorem}
\newtheorem{commute}[traceless]{Theorem}
\newtheorem{commute_id}{Corollary}
\newtheorem{symmetry}[traceless]{Theorem}
\newtheorem{symmetry_id}[commute_id]{Corollary}
\newtheorem{polynomials}[traceless]{Theorem}
\newtheorem{alphas}[traceless]{Theorem}
\newtheorem{hermitian}[commute_id]{Corollary}
\newtheorem{orth_algo}[commute_id]{Corollary}
\newtheorem{single_obc}[traceless]{Theorem}
\newtheorem{single_pbc_uneven}[traceless]{Theorem}
\newtheorem{double_obc_different}[traceless]{Theorem}
\newtheorem{double_obc_equal}[traceless]{Theorem}
\newtheorem{double_obc_gen}[commute_id]{Corollary}
\newtheorem{double_obc_div}[traceless]{Theorem}
\newtheorem{double_pbc_odd}[traceless]{Theorem}
\newtheorem{double_pbc_gen}[commute_id]{Corollary}
\newtheorem{triple_obc_odd}[traceless]{Theorem}
\newtheorem{triple_obc_two}[traceless]{Theorem}
\newtheorem{triple_obc_gen}[commute_id]{Corollary}
\newcommand*{\trace}{\mathsf{Tr}}
\DeclareMathOperator{\Span}{span}
\title{Towards a better Understanding of the Matrix Product Function Approximation Algorithm in Application to Quantum Physics}
\author{Moritz August\thanks{Department of Informatics, Technical University of Munich, 85748 Garching, Germany (august@in.tum.de)} \and Thomas Huckle\thanks{Department of Informatics, Technical University of Munich, 85748 Garching, Germany (huckle@in.tum.de)}}
\begin{document}

\maketitle

\begin{abstract}
	We recently introduced a method to approximate functions of Hermitian Matrix Product Operators or Tensor Trains that are of the form $\trace f(A)$.
	Functions of this type occur in several applications, most notably in quantum physics.
	In this work we aim at extending the theoretical understanding of our method by showing several properties of our algorithm that can be used 
	to detect and correct errors in its results. 
	Most importantly, we show that there exists a more computationally efficient version of our algorithm for certain inputs.
	To illustrate the usefulness of our finding, we prove that several classes of spin Hamiltonians in quantum physics fall into this input category.
	We finally support our findings with numerical results obtained for an example from quantum physics.
\end{abstract}

\begin{keywords}
tensor decompositions, matrix product states, tensor trains, numerical analysics, Lanczos method, Gauss quadrature, quantum physics
\end{keywords}

\begin{AMS}
65F60, 65D15, 65D30, 65F15, 46N50, 15A69
\end{AMS}

\maketitle

\section{Introduction}
Approximating functions of the form $\trace f(A)$ where $f:\mathbb{C}^{N \times N} \rightarrow \mathbb{C}^{N \times N}$ is analytic and smooth for large Hermitian matrices $A \in \mathbb{C}^{N \times N}$ is a problem of interest in areas such as computational chemistry, 
graph theory or quantum physics. 
In quantum physics, fundamental properties of states of many particle systems such as the entanglement entropy, the trace norm, heat capacity or expectation values are defined as functions of this form~\cite{sakurai1995modern}.

While computing $\trace f(A)$ is not challenging for small to medium size matrices, it becomes significantly harder for larger matrices of size $2^L$ with $L \gg 20$ where numerical diagonalization becomes computationally infeasible.
We have recently addressed this issue by presenting the first algorithm~\cite{august2017approximation} that is able to approximate such functions even for matrices of very high dimensionality via a combination of the global Krylov method with its connection to Gauss-type quadrature and the matrix product state (or tensor train) tensor decomposition scheme.
Our method constructs a basis $[U_1, \ldots, U_K]$ of $\Span \{A^0, A^1, \ldots, A^{K-1}\}$ where $U_i \in \mathbb{C}^{N \times N}$ and yields the projection $T_K$ of $A$ onto that space, which is used to approximate the desired function.

We have shown that our algorithm converges to the exact result or an arbitrarily good approximation thereof in the case of exact arithmetics and exact representation of the $U_i$.
While this result is instructive to understand the theoretical capability of the method, in practice the tensor decomposition is used to approximate $A$ and the $U_i$ and hence introduces an approximation error into the calculations.
Unfortunately, it is very difficult to analyze the propagation of such approximation errors over the course of a complete run of the algorithm and their influence on the final function approximation.
It is therefore important to gain a deeper understanding of theoretical properties of partial results of the computation, namely the $U_i$ and $T_i$, in order to be able to detect and possibly correct unwanted artifacts caused by the approximation errors.
In addition to that, we would of course like to avoid unnecessary operations that might introduce approximation errors and waste runtime whenever possible.
Thus, in this work we present several results regarding analytical properties of the $U_i$ and $T_i$ in the exact case and also show how these results can be used to obtain a more efficient version of our algorithm for a certain case of input matrices $A$.

While our algorithm is of general nature, as was hinted at above an important field of application can be found in numerical quantum physics.
Here, tensor networks have already been applied with great success for some time~\cite{garcia2006time, schollwock2011density, verstraete2008matrix, huckle2013computations, verstraete2006matrix, vidal2004efficient, vidal2003efficient, verstraete2004matrix, pirvu2010matrix} but so far a method to approximate functions of the type considered here was lacking.
Because of this, we will additionally present an analysis of possible use cases of our newly discovered algorithmic improvement for the application of spin Hamiltonians, an important problem in numerical quantum physics.

The rest of this work is structured as follows: in Section~\ref{algorithm}, we briefly introduce our method. 
Equipped with this knowledge, we present our analytical findings in Section~\ref{proofs}. 
In Section~\ref{hamiltonians}, we then present our analysis of possible applications of the previously introduced results in quantum physics.
Following this, we proceed to provide numerical evidence of the correctness of our claims regarding the existence of an improved version of our algorithm in Section~\ref{numerics}.
Finally, we conclude this work in Section~\ref{conclusion}.

\section{The Algorithm}
\label{algorithm}
\begin{algorithm}[t!]
\setlength{\leftskip}{10pt}
\setlength{\skiprule}{10pt}
\caption{Approximation Algorithm}
\label{approx}
    \SetKwInOut{Input}{Input}
    \SetKwInOut{Output}{Output}
    \SetKwFunction{multOpt}{multiply}
    \SetKwFunction{addOpt}{sum}
    \SetKwFunction{contract}{innerProduct}
	\SetKwFunction{scalMult}{multiply}
	\SetKwFunction{spec}{spectralDecomposition}
	\SetKwFunction{checks}{checkStop}

    \Input{MPO $A[D_{A}] \in \mathbb{C}^{N \times N}$, Starting orthogonal MPO $U[D_{init}] \in \mathbb{C}^{N \times N}$, Number of Dimensions $K$, Maximal Bond-Dimension $D_{max}$, Stopping Criteria $\mathcal{S}$}    
	%$\beta_1 \leftarrow \sqrt{\contract (U_1, U_1)}$ \;    
    %$U_1 \leftarrow \scalMult(1/ \beta_1, U_1$) \;
    $U_0 \leftarrow 0$ \;
    $V_0 \leftarrow U$ \;
    $D \leftarrow D_{init}$ \;
    \For{$i \leftarrow 1 ; i \leq K $}{
	$\beta_i \leftarrow \sqrt{\contract (V_{i-1}, V_{i-1})}$ \;
    \If{$\beta_i = 0$}{break \;}
    $U_i \leftarrow  \scalMult(1 / \beta_i,V_{i-1})$ \;    
    $D \leftarrow \min(D_{max}, D \cdot D_{A})$ \;
    $V_i \leftarrow \multOpt(A, U_{i}, D)$ \;
	$D \leftarrow \min(D_{max}, D + D_{U_{i-1}})$ \;    
    $V_i \leftarrow \addOpt(V_i, - \beta_i U_{i-1}, D)$ \;
    $\alpha_i \leftarrow \contract(U_i, V_i)$ \;
	$D \leftarrow \min(D_{max}, D + D_{U_{i}})$ \;    
    $V_i \leftarrow \addOpt(V_i, -\alpha_i U_{i}, D)$ \;
    
    $V_T \Lambda_T V_T^* \leftarrow \spec(T_i)$ \;
    $\mathcal{G}f \leftarrow \beta_1^2 e_1^T V_T f(\Lambda_T) V_T^* e_1$ \;
    \If{$\checks(\mathcal{G}f, \Lambda_T, \mathcal{S})$}{break \;}
    }
    
  	\Output{Approximation $\mathcal{G}f$ of $\trace f(A)$}
\end{algorithm}

	As we have stated above, our goal is to approximate functions of the form $\trace f(A)$. 
	For smaller to medium sized matrices, there already exists a well-established method to achieve this in performing a Gauss-type quadrature via the projection of $A$ onto a Krylov space starting with a carefully chosen initial vector~\cite{bellalij2015bounding, bai1996some, golub1994matrices, tang2012probing, reichel2015generalized}. 

	For symmetric or Hermitian matrices, the global Lanczos method recently was introduced as a formulation of the classical Lanczos method in terms of basis \emph{matrices} with the Frobenius inner product defined as
\begin{equation*}
\langle U_i, U_j \rangle = \trace U_i^* U_j
\end{equation*}
and $U_i, U_j \in \mathbb{C}^{N \times M}$~\cite{bellalij2015bounding}. 
Note that this inner product acts on entire matrices, meaning that the global Lanczos method differs from other block Krylov methods in that it only orthogonalizes whole matrices in contrast to the individual columns therein.
The algorithm iteratively builds up a basis $\mathbf{U}_i = [U_1, U_2, \ldots, U_i]$ of the Krylov space and yields the partial global Lanczos decomposition
\begin{equation*}
A \mathbf{U}_i = \mathbf{U}_i \tilde{T}_i + \beta_{i+1}U_{i+1}E_i^T
\end{equation*}
where $\tilde{T}_i = T_i \otimes I_M \in \mathbb{R}^{iM \times iM}$ and $E_i^T = [\mathbf{0},\cdots,\mathbf{0},I_M] \in \mathbb{R}^{M \times iM}$. 
It was shown that the connection between the Lanczos algorithm and Gauss quadrature extends to the global Lanczos method. 
Hence, for an initial matrix $U \in \mathbb{C}^{N \times M}$ it in general holds
\begin{align*}
	\trace U^{*} f(A) U = \int f(\lambda) d\mu(\lambda) \approx \trace f(A)
\end{align*}
with $\mu$ being the distribution in the Riemann-Stieltjes integral generated by $U$ and $ $$\trace U^* f(A) U$ yields a Gauss quadrature of $\trace f(A)$.
However, for the algorithm to remain computationally efficient or at least more efficient than computing the eigenvalue decomposition of $A$ directly, it is required that $M \ll N$ and thus $U$ can not be orthogonal/unitary. 
This implies that $U^*U \neq I$ and consequentially the method does in general not converge to the exact result so that sampling over multiple starting matrices is required if the approximation error is to be minimized. 
Additionally, for very large matrices even the computation of the aforementioned inner product becomes infeasible.

To allow for approximations of larger matrices, we reformulated the global Lanczos algorithm in terms of matrix product operators (MPO) which support all basic linear algebra operations. 
A matrix product operator decomposes a matrix $A \in \mathbb{C}^{N \times N}$ such that 
\begin{equation*}
A_{ij} = A_{i_1\dots i_Lj_1\dots j_L} = \trace C_1^{i_1j_1}C_2^{i_2j_2} \dotsm C_L^{i_Lj_L}
\end{equation*}
where the indices $i,j$ are split up into $i_1,\ldots,i_L$ and $j_1,\ldots,j_L$ respectively and are called the physical indices. 
We here assume all physical indices to be of equal dimension $d=2$ which corresponds to the assumption that $N = d^L$ but our results also carry over to the case of varying $d_k$. 
The $C_k \in \mathbb{C}^{D_k \times D_k \times d \times d}$ are called the core tensors where $D_k$ is referred to as the bond dimension or auxiliary index. 
We additionally define the bond dimension of the MPO $D=\max_k D_k$ to be the maximal bond dimension over all core tensors.
It follows that $C_k^{i_k, j_k}$ is a matrix of size at most $D \times D$ and the right-hand side of the above equation yields a scalar.
The matrix product operator representation of a matrix requires $Ld^2D^2$ parameters which, depending on the choice of $D$, either poses an approximation or suffices for an exact representation.
While naturally the accuracy of the approximation increases with growing $D$, it is commonly chosen such that $Ld^2D^2 \in \mathcal{O}(poly(L))$ and thus yields an efficient,i.\ e.\ polynomial in contrast to exponential in $L$, representation. 
It has been found that such choices of $D$ often suffice for a good approximation.
Note that especially in numerical quantum physics, it is possible and common to formulate matrices of interest, such as Hamiltonians, directly as matrix product operators and perform computations on them so that an explicitly stored matrix is at no point required.
While explaining the decomposition in more detail exceeds the scope of this section, we refer the interested reader to the overview articles~\cite{schollwock2011density, perez2006matrix, grasedyck2013literature}.

Our algorithm can thus be perceived as the global Lanczos algorithm reformulated for matrix product operators. 
However, the differences between the methods extend beyond the different possible sizes of the input matrices. 
In our method, we choose the identity matrix written as a matrix product operator as initial matrix $U$. 
This can be done exactly with a minimal bond dimension of $D=1$. 
Thus, we are theoretically and practically able to start our computation with an orthogonal/unitary matrix of the same dimensionality as $A$ which is not feasible in the original method as discussed above. 
Hence, instead of the previous equation 
\begin{equation*}
	\trace U^* f(A) U \approx \trace f(A)
\end{equation*}
with $U \in \mathbb{C}^{N \times M}$ and $M \ll N$, in our method it holds

\begin{equation*}
	\trace U^* f(A) U = \trace f(A)
\end{equation*}
and $U \in \mathbb{C}^{N \times N}$ is orthogonal/unitary.
Our algorithm consequentially computes a Krylov basis of $\Span\{ A^0,A^1,A^2,\cdots,A^{K-1} \}$ and the approximation error in $\trace f(A)$ is controlled by the maximal Krylov dimension $K$ and the maximally allowed bond dimension $D_{max}$.
This implies that our algorithm produces an approximation of the exact result which can in principle be made arbitrarily accurate by increasing $K$ and most importantly $D_{max}$. 
The method is shown in Algorithm~\ref{approx}. 
Note that the subfunctions \texttt{multiply} and \texttt{sum} involve solving an optimization problem to find a good representation of the result for a given bond dimension $D$.
The respective optimization algorithms employ the sweeping scheme typical for tensor network optimizations where the individual core tensors are optimized sequentially in a dynamic programming fashion.
In conclusion, our algorithm poses the first method to approximate functions of the form $\trace f(A)$ of matrices of size significantly larger then $2^{20}$  and has no analytical lower bound on the approximation error.

\section{Analytical Results}
\label{proofs}
As in practice we must impose $D \in \mathcal{O}(poly(L))$ to remain computationally feasible, it is important to be able to detect when the approximations made lead to unreasonbly large errors in the computed basis matrices $U_i$ and the projections $T_i$ of $A$. 
This is especially important as it is clear that since the basis matrices are computed iteratively and depending on the previously computed ones, any error introduced in a given iteration will be propagted and influence all following iterations.
Additionally, since the basic arithmetic operations are comparibly costly in the matrix product operator domain and can infuse errors into the computation, we would like to reduce their number whenever possible.
In this section we will thus present some analytical results on properties of the basis matrices $U_i$ and the projection $T_i$ that can be checked for during a run of Algorithm~\ref{approx} and that finally give rise to a more efficient version of our method for a special class of inputs.
All following proofs and corollaries assume the matrix $A \in \mathbb{C}^{N \times N}$ to be Hermitian and the initial matrix $U$ to be of the same dimensions as $A$. 
Note that we additionally assume exact arithmetics and exact representation of the $U_i$ as our aim is to derive insight about the algorithm's ideal behaviour to be able to detect deviations from it.
As we will make use of these equations in all following proofs of this section, it is worthwhile to explicitly state the update rules
\[
U_{n+1} = \left(AU_n - \alpha_{n+1} U_n - \beta_n U_{n-1}\right)/\beta_{n+1}
\]
and
\[
\alpha_{n+1} = \trace U_n^* U_{n+1}
\]
as implied by Algoritm~\ref{approx}.
Note also that our analysis is based on the Frobenius inner product that was, to the best of the author's knowledge, for the first time employed in the original global Lanczos algorithm~\cite{elbouyahyaoui2009algebraic, bellalij2015bounding} and our consecutive work~\cite{august2017approximation}.
Combined with the fact that we assume $A$ and $U$ to be of equal dimension this shows that our results are complementary to other work on structured matrices in Krylov type algorithms~\cite{mastronardi2005lanczos, fassbender2006structured, browne2009lanczos, benner2017fast, shao2016structure, mehrmann2001structure, voss2000symmetry}.
Given these assumptions it is also clear that existing analyses of standard Lanczos type algorithms working on column vectors can not cover the following results.

We begin by stating a result about the inheritance of tracelessness of the basis matrices from the input $A$.
Since it is possible to efficiently compute the trace of a given MPO, this property of the $U_i$ can be efficiently checked for and, if desired, enforced during a run of the algorithm. 

\begin{traceless}
If $A \in \mathbb{C}^{N \times N}$ is traceless and $U_0 \in \mathbb{C}^{N \times N} = I_N/\beta_0$, all basis matrices $U_i \in \mathbb{C}^{N \times N}, i \in \{1,\cdots,K\}$ as constructed by the algorithm are traceless.
\end{traceless}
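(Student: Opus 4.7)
Plan: I would proceed by induction on $i$, leaning on the three-term recurrence displayed just before the theorem. For the base case $i = 1$, the recurrence reads $\beta_1 U_1 = AU_0 - \alpha_1 U_0$, with $\alpha_1 = \langle U_0, AU_0 \rangle_F = \trace(A)/\beta_0^2$. Tracelessness of $A$ forces $\alpha_1 = 0$, so $U_1 = A/(\beta_0\beta_1)$ is a real scalar multiple of $A$ and hence $\trace U_1 = 0$.

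For the inductive step, taking the trace of $\beta_{n+1} U_{n+1} = AU_n - \alpha_{n+1} U_n - \beta_n U_{n-1}$ yields
\[
\beta_{n+1} \trace U_{n+1} = \trace(AU_n) - \alpha_{n+1} \trace U_n - \beta_n \trace U_{n-1}.
\]
For $n \geq 2$ the last two terms vanish by the inductive hypothesis, reducing the problem to showing $\trace(AU_n) = 0$. I plan to derive this from the base-case identity $A = \beta_0\beta_1 U_1$: substituting gives $\trace(AU_n) = \beta_0\beta_1 \trace(U_1 U_n) = \beta_0\beta_1 \langle U_1, U_n \rangle_F$, where the second equality uses that $U_1$ inherits Hermiticity from $A$. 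Global-Lanczos orthogonality of the basis (a consequence of the three-term recurrence in exact arithmetic) then gives zero.

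The case $n = 1$ (proving $\trace U_2 = 0$) is the only step requiring special care, since $\trace U_0 = N/\beta_0$ does not vanish. I plan to verify the cancellation $\trace(AU_1) = \beta_1 \trace U_0$ by direct computation: using $U_1 = A/(\beta_0\beta_1)$ one has $\trace(AU_1) = \trace(A^2)/(\beta_0\beta_1)$, and combining $\beta_1 = \|A\|_F/\beta_0$ (which holds because $\alpha_1 = 0$ and therefore $V_0 = AU_0$) with $\beta_0 = \|I_N\|_F = \sqrt{N}$ collapses both sides to $\|A\|_F$.

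I expect the $n = 1$ step to be the main obstacle: it is not purely symbolic but depends on the explicit normalization $\beta_0 = \sqrt{N}$, and so has to be carried out separately from the clean $n \geq 2$ argument. Everything else follows routinely from the recurrence together with the orthogonality of the global Lanczos basis.
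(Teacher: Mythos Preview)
Your proposal is correct and follows essentially the same route as the paper: induction on $n$, with $n=1$ and $n=2$ handled separately and the general step $n\geq 2$ reduced to $\trace(AU_n)=0$. In fact you are more explicit than the paper at the key point: the paper simply writes $\trace(AU_n)=0$ in the inductive step without comment, whereas you justify it via $A=\beta_0\beta_1 U_1$ together with the Frobenius orthogonality $\langle U_1,U_n\rangle_F=0$. Your treatment of the $n=2$ cancellation via $\beta_0=\sqrt{N}$ and $\beta_1=\|A\|_F/\beta_0$ is exactly the computation the paper performs, just unpacked differently (the paper writes $\trace(AU_1)=\beta_0\beta_1\,\trace U_1^*U_1=\beta_0\beta_1$ and cancels against $\beta_1\trace U_0=\beta_1 N/\beta_0$).
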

\begin{proof}
We prove the statement by induction over the iteration number $n$ of the algorithm.
For $n=1$, it is easy to see that $\trace U_1 = \trace A/(\beta_1 \beta_0) = 0$ as $\alpha_1  = \trace A/\beta_0^2 = 0$.
We now obtain for $n=2$ that $\trace U_2 = (\beta_1\beta_0 \trace U_1^*U_1 - \alpha_2/(\beta_1\beta_0) \trace A - \beta_1/\beta_0 \trace I)/\beta_2 = 0$.
This establishes the inductive basis.

In the inductive step for $n \geq 2$ we then have

\begin{align*}
\trace U_{n+1} =& (\trace AU_n - \alpha_{n+1} \trace U_n - \beta_n \trace U_{n-1})/\beta_{n+1} \\
=& (\trace AU_n - \alpha_{n+1} 0 - \beta_n 0)/\beta_{n+1} \\
=& 0
\end{align*}

\end{proof}

Next, we present a result about the commutation relation of $A$ and the $U_i$ that will also become useful for proving subsequent statements.
As for the previous result, this property can be efficiently checked for during an execution of the algorithm assuming MPO representation of the $U_i$ by computing the Frobenius norm, which is efficiently computable for MPOs, of the distance between $AU_i$ and $U_iA$.

\begin{commute}
If $A \in \mathbb{C}^{N \times N}$ commutes with $U_0 \in \mathbb{C}^{N \times N}$, A commutes with all basis matrices $U_i \in \mathbb{C}^{N \times N}, i \in \{1,\cdots,K\}$ as constructed by the algorithm.
\end{commute}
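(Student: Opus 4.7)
The plan is to argue by induction on the iteration index $n$ using the three-term recurrence
\[
U_{n+1} = (AU_n - \alpha_{n+1}U_n - \beta_n U_{n-1})/\beta_{n+1}
\]
stated immediately before Theorem~1. The key observation is purely algebraic: each $U_{n+1}$ is a scalar combination of $AU_n$, $U_n$, and $U_{n-1}$, so if every matrix on the right-hand side commutes with $A$ then so does $U_{n+1}$, because commutation with $A$ is preserved under addition, scalar multiplication, and left multiplication by $A$ itself.

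For the base case, commutation of $U_0$ with $A$ is given by hypothesis. For $n=1$ the recurrence degenerates to $U_1 = (AU_0 - \alpha_1 U_0)/\beta_1$ since the $\beta_0 U_{-1}$ term is absent; from $AU_0 = U_0 A$ one immediately obtains $A(AU_0) = A^2 U_0 = (AU_0)A$, so $AU_1 = U_1 A$.

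For the inductive step, I assume $AU_k = U_k A$ for all $k \le n$ and compare $AU_{n+1}$ with $U_{n+1}A$ term by term on the recurrence. The scalar-coefficient terms $\alpha_{n+1}U_n$ and $\beta_n U_{n-1}$ intertwine trivially with $A$ by the inductive hypothesis applied to $U_n$ and $U_{n-1}$. The only subtlety is the term $AU_n$: its product with $A$ on the right is $(AU_n)A = A(U_n A) = A(AU_n)$, again by the hypothesis applied to $U_n$. Combining these equalities and dividing by $\beta_{n+1}$ yields $AU_{n+1} = U_{n+1}A$, closing the induction.

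I do not anticipate any substantive obstacle here. Conceptually, the recurrence makes $U_n$ a polynomial in $A$ acting on $U_0$, and any such polynomial expression inherits commutation with $A$ from $U_0$; the induction merely makes this observation precise without ever writing the polynomial down explicitly. The only mildly delicate point is ensuring the $n=1$ case is treated separately because of the missing $U_{-1}$ term, but the calculation there is a strict special case of the general step.
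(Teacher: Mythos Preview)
Your proposal is correct and follows essentially the same approach as the paper: an induction on $n$ using the three-term recurrence, with the base case $n=1$ handled explicitly and the inductive step verified by checking that each summand in the recurrence commutes with $A$. The paper writes out the commutator $[U_{n+1},A]$ explicitly and cancels, whereas you argue term by term, but this is only a cosmetic difference.
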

\begin{proof}
We again prove the statement by induction over the iteration number $n$.
To start, we note that $[U_1, A] = \left( (A A U_0 - \alpha_1 A U_0) - (A A U_0 - \alpha_1 A U_0) \right) /\beta_1 = 0$.

In the inductive step for $n \geq 1$, it is now straight forward to see that
\begin{align*}
[U_{n+1}, A] =& \left[ (AU_nA - \alpha_{n+1} U_nA - \beta_n U_{n-1}A) - (AAU_n - \alpha_{n+1} A U_n - \beta_n A U_{n-1}) \right]/\beta_{n+1} \\
=& \left[ (AAU_n - \alpha_{n+1} AU_n - \beta_n A U_{n-1}) - (AAU_n - \alpha_{n+1} A U_n - \beta_n A U_{n-1}) \right]/\beta_{n+1} \\
=& 0
\end{align*}

\end{proof}
\begin{commute_id}
	We note that any $A \in \mathbb{C}^{N \times N}$ commutes with $I_N$. 
	Thus it follows that the above statement holds for Algorithm~\ref{approx}.
\end{commute_id}

The following finding addresses the symmetry properties of the basis matrices in relation to the input $A$ and the initial basis matrix $U_0$.
These symmetry properties can as well be tested efficiently in a manner similar to the way the commutation relation can be checked since the permutation matrix $J$, like $I$, permits a formulation in MPO format with minimal bond dimension.
Additionally, symmetries could be leveraged to obtain more efficient representations of the $U_i$ by reflecting them in the structure of the MPOs and thus obtaining more efficient and stable expressions.

\begin{symmetry}
If $A \in \mathbb{R}^{N \times N}$ is symmetric, persymmetric or centrosymmetric and $U_0 \in \mathbb{R}^{N \times N}$ is symmetric, persymmetric or centrosymmetric and commutes with $A$, all basis matrices $U_i \in \mathbb{R}^{N \times N}, i \in \{1,\cdots,K\}$ as constructed by the algorithm are symmetric, persymmetric or centrosymmetric.
\end{symmetry}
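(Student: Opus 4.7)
My plan is to run a straightforward induction on the iteration number $n$, handling the three structural properties (symmetric, persymmetric, centrosymmetric) as three parallel cases that all reduce to the same algebra on the update rule $U_{n+1} = (AU_n - \alpha_{n+1}U_n - \beta_n U_{n-1})/\beta_{n+1}$. The base case ($n=1$, using $U_1 = (AU_0 - \alpha_1 U_0)/\beta_1$) is handled by exactly the same computation as the inductive step. Because $A$ is real and the $U_i$ are real, the scalars $\alpha_i,\beta_i$ are real, so the linear combination part of the update rule preserves each of the three properties automatically under the induction hypothesis; the only piece that needs genuine work is the product $AU_n$.

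For the symmetric case I will write $(AU_n)^T = U_n^T A^T = U_n A = A U_n$, where the last equality invokes Corollary~\ref{commute_id} (since $U_0$ commutes with $A$, Theorem~\ref{commute} guarantees that every $U_i$ does). For the persymmetric case I will use the characterisation $JB^T J = B$ together with $J^2 = I$ and compute
\begin{equation*}
J(AU_n)^T J \;=\; (JU_n^T J)(JA^T J) \;=\; U_n A \;=\; A U_n,
\end{equation*}
which again leans on commutation to reorder the factors after transposition. For the centrosymmetric case no transposition appears and the identity
\begin{equation*}
J(AU_n)J \;=\; (JAJ)(JU_n J) \;=\; A U_n
\end{equation*}
suffices. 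Combined with the trivial analogues for $\alpha_{n+1}U_n$ and $\beta_n U_{n-1}$, this yields the desired invariance of $U_{n+1}$ in each case, closing the induction.

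The main obstacle is conceptual rather than computational: one must recognise that the symmetric and persymmetric cases are transposition-based and therefore cannot be closed without re-absorbing an $A$ that has been flipped to the other side of $U_n$, which is precisely what commutation with $A$ provides. This explains the seemingly extraneous hypothesis that $U_0$ commute with $A$ and identifies Theorem~\ref{commute} as the crucial prerequisite for the first two cases; the centrosymmetric case uses commutation only implicitly, through the propagation of the property to $U_{n-1}$ in the induction hypothesis. Once this dichotomy is spotted the remainder is bookkeeping.
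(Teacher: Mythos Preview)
Your proposal is correct and follows essentially the same inductive route as the paper's proof, treating the three structural properties in parallel on the update rule and isolating $AU_n$ as the only nontrivial term. The only cosmetic differences are that you work with the equivalent characterisations $JB^{T}J=B$ and $JBJ=B$ where the paper uses $BJ=JB^{T}$ and $JB=BJ$, and you make the appeal to the commutation result explicit where the paper silently uses $U_nA=AU_n$ in the final equalities of the symmetric and persymmetric cases.
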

\begin{proof}
As for the above statements, we prove this statement by induction over the iteration number $n$.
To establish the inducte basis, we observe that for the case of symmetry 
$U_1^T = ((AU_0)^T - \alpha_1 U_0^T)/\beta_1 = (A U_0 - \alpha_1 U_0)/\beta_1 = U_1$. 
Likewise, we find that $U_1J = (AU_0 - \alpha_1 U_0)/\beta_1 J = J ((A U_0)^T - \alpha_1 U_0^T)/\beta_1 = J U_1^T$ for persymmetry and finally 
$JU_1 = J (AU_0 - \alpha_1 U_0)/\beta_1 = (A U_0 - \alpha_1 U_0)/\beta_1 J = U_1J$ for centrosymmetry.

For $n \geq 1$, we can now make the inductive step by 
\begin{align*} 
U_{n+1}^T =& ((AU_n)^T - \alpha_{n+1} U_n^T - \beta_n U_{n-1}^T)/\beta_{n+1}\\
=& (U_n A - \alpha_{n+1} U_n - \beta_n U_{n-1})/\beta_{n+1} \\
=& U_{n+1}
\end{align*}
for symmetry,
\begin{align*}
U_{n+1} J =& (A U_n - \alpha_{n+1} U_n - \beta_n U_{n-1})/\beta_{n+1} J\\
=& (J A^T U_n^T - \alpha_{n+1} J U_n^T - \beta_n J U_{n-1}^T)/\beta_{n+1} \\
=& J U_{n+1}^T
\end{align*} 
for persymmetry and 
\begin{align*}
JU_{n+1} =& J (AU_n - \alpha_{n+1} U_n - \beta_n U_{n-1})/\beta_{n+1} \\
=& (A U_n J - \alpha_{n+1} U_n J - \beta_n U_{n-1} J)/\beta_{n+1} \\
=& U_{n+1} J
\end{align*}
for centrosymmetry.
\end{proof}

\begin{hermitian}
As can be easily verified based on its proof, the above statement extends to the case of hermiticity, perhermiticity and centrohermiticity when $A, U_i \in \mathbb{C}^{N \times N}, i \in \{0,\cdots,K\}$.
\end{hermitian}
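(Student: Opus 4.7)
The plan is to mimic the induction of the preceding symmetry theorem, replacing the ordinary transpose $(\cdot)^T$ throughout by the conjugate transpose $(\cdot)^*$. The three cases (hermiticity, perhermiticity, centrohermiticity) are the natural complex analogues of symmetry, persymmetry, and centrosymmetry, obtained by conjugating the scalar entries in the defining identities; so the corollary will follow once a small set of ingredients is in place: reality of the Lanczos scalars $\alpha_i$ and $\beta_i$ so that they are invariant under conjugation, reality of the exchange matrix $J$ so that $J^* = J^T = J$ and every persymmetric/centrosymmetric manipulation translates verbatim, and the fact that each $U_i$ commutes with $A$, which comes directly from the commutation theorem above.

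Of these, the only step that is not already present in the real proof is the reality of $\alpha_i$, and I regard it as the main point to justify. The quantity $\beta_i = \sqrt{\langle V_{i-1}, V_{i-1}\rangle}$ is the square root of a nonnegative Frobenius norm and is therefore real by construction. For $\alpha_i$, invoking the basis orthogonality $\trace U_i^* U_{i-1} = 0$ in the algorithm's definition reduces it to $\alpha_i = \trace U_i^* A U_i$; its complex conjugate equals $\trace U_i^* A^* U_i = \alpha_i$ by hermiticity of $A$, so $\alpha_i \in \mathbb{R}$.

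With these ingredients in hand, each of the three inductive steps is a line-for-line copy of its real counterpart with $(\cdot)^T$ replaced by $(\cdot)^*$. For hermiticity, for instance, $(AU_n)^* = U_n^* A^* = U_n A = A U_n$ follows from the inductive hypothesis, $A^* = A$, and the commutation established above, which yields
\begin{align*}
U_{n+1}^* &= \left((AU_n)^* - \alpha_{n+1} U_n^* - \beta_n U_{n-1}^*\right)/\beta_{n+1}\\
&= \left(AU_n - \alpha_{n+1} U_n - \beta_n U_{n-1}\right)/\beta_{n+1} = U_{n+1}.
\end{align*}
The perhermitian and centrohermitian inductions proceed analogously, with $J^* = J$ playing exactly the role that $J^T = J$ did in the real case, so that no new manipulation beyond placing conjugates on the scalar entries is required.
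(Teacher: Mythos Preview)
Your proposal is correct and follows precisely the route the paper indicates; the paper gives no proof beyond the remark that the preceding argument carries over verbatim with $(\cdot)^*$ in place of $(\cdot)^T$. Your explicit verification that the Lanczos scalars $\alpha_i$ and $\beta_i$ are real---so that conjugation leaves them fixed---is the one genuinely new ingredient over the real case, and it is a detail the paper does not spell out.
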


\begin{symmetry_id}
We note that the matrix $I_N$ is symmetric, persymmetric and centrosymmetric as well as hermitian, perhermitian and centrohermitian. 
Hence the above statements hold for Algorithm~\ref{approx}.
\end{symmetry_id}

We now turn our attention to a description of the $U_i$ in terms of polynomials as might seem natural given the underlying Lanczos algorithm. 
However, we restrict our analysis to the particular case where all $\alpha_i = 0$
to obtain a result that will become important in the proof of the subsequent statement.

\begin{polynomials}
If for $A \in \mathbb{C}^{N \times N}$ all $\alpha_i=0, i \in \{1,\cdots,K\}$ as computed by the algorithm and $U_0 \in \mathbb{C}^{N \times N} = I_N/\beta_0$, then all $U_i \in \mathbb{C}^{N \times N}, i \in \{1,\dots,K\}$ are polynomials of the form $\sum_{j \in 2 \mathbb{N}_0 \leq i} c_j A^j$ if $i$ is even and $\sum_{j \in 2 \mathbb{N}-1 \leq i} c_j A^j$ if $i$ is odd.
\end{polynomials}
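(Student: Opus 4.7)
The plan is a straightforward induction on the iteration index $n$, exploiting the fact that with all $\alpha_i = 0$ the three-term recurrence collapses to
\[
U_{n+1} = \bigl(A U_n - \beta_n U_{n-1}\bigr)/\beta_{n+1},
\]
which preserves parity in the powers of $A$ under one step of multiplication by $A$. The claim to carry through the induction is: for every $n$, $U_n$ is a polynomial in $A$ of degree at most $n$ containing only even powers when $n$ is even and only odd powers when $n$ is odd.

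For the base case I would verify $n=0$ and $n=1$ explicitly. The hypothesis $U_0 = I_N/\beta_0 = (1/\beta_0) A^0$ is already of the required even form. Then $U_1 = (A U_0 - \alpha_1 U_0)/\beta_1 = A/(\beta_0 \beta_1)$ is a pure degree-$1$ monomial, hence of the required odd form. I would also carry out $n=2$ once for sanity to see that $U_2 = (AU_1 - \beta_1 U_0)/\beta_2$ is a linear combination of $A^2$ and $A^0$, matching the claimed even pattern.

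For the inductive step I would assume the parity/degree claim for $U_{n-1}$ and $U_n$ and apply it in the recurrence. If $n$ is even, then by induction $U_n$ contains only even powers of $A$ up to degree $n$, so $AU_n$ contains only odd powers up to degree $n+1$; simultaneously $U_{n-1}$ contains only odd powers up to degree $n-1$, so $\beta_n U_{n-1}$ is a sum of odd powers of degree at most $n-1 \leq n+1$. Their difference, divided by $\beta_{n+1}$, therefore contains only odd powers of $A$ up to degree $n+1$, which is exactly the form required for $U_{n+1}$ since $n+1$ is odd. The case $n$ odd is the mirror image: $AU_n$ is a sum of even powers up to $n+1$, $\beta_n U_{n-1}$ is a sum of even powers up to $n-1$, so $U_{n+1}$ is a sum of even powers up to $n+1$, matching the requirement for the even index $n+1$.

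There is no real obstacle here beyond keeping the parity bookkeeping and the degree bound consistent; the argument rests entirely on the observation that, in the $\alpha_i=0$ regime, multiplication by $A$ flips the parity of the power pattern while the subtracted term $\beta_n U_{n-1}$ already has the same parity as $AU_n$ (since $U_{n-1}$ and $U_n$ differ in parity), and the degree strictly below the new target. The $\alpha_i=0$ assumption is essential: otherwise the $-\alpha_{n+1}U_n$ term would inject a contribution of the wrong parity and the clean polynomial structure would be destroyed.
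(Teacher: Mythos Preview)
Your proposal is correct and follows essentially the same inductive argument as the paper: both exploit that with $\alpha_i=0$ the recurrence $U_{n+1}=(AU_n-\beta_n U_{n-1})/\beta_{n+1}$ maps the parity pattern of $U_n$ and $U_{n-1}$ into the required pattern for $U_{n+1}$, splitting into even and odd cases. The only cosmetic difference is that the paper writes out the base cases explicitly up to $U_3$, whereas your $U_0,U_1$ (and optional $U_2$) already suffice to launch the two-term induction.
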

\begin{proof}
We again prove the statement by induction over the iteration number $n$.
We establish the inductive basis by observing that $U_0 = A^0/\beta_0$, $U_1 = A/(\beta_1 \beta_0)$, $U_2 = A^2/(\beta_2 \beta_1 \beta_0) - \beta_1/(\beta_2 \beta_0) A^0$ and $U_3 = A^3/(\beta_3 \beta_2 \beta_1 \beta_0) - (\beta_2^2 + \beta_1^2)/(\beta_3 \beta_2 \beta_1 \beta_0) A$ are all polynomials of the types specified above.

In the inductive step, we then find for even $n$ that 
\begin{align*}
U_{n+1} =& (AU_n - \beta_n U_{n-1})/\beta_{n+1} \\
=& \left(A\sum_{j \in 2 \mathbb{N}_0 \leq n} c_j A^j - \beta_n \sum_{j \in 2 \mathbb{N}-1 \leq n-1} d_j A^j\right)/\beta_{n+1} \\
=& \left(\sum_{j \in 2 \mathbb{N}-1 \leq n+1} c_j A^j - \beta_n \sum_{j \in 2 \mathbb{N}-1 \leq n-1} d_j A^j\right)/\beta_{n+1} \\
=& \sum_{j \in 2 \mathbb{N}-1 \leq n+1} (c_j-\beta_n d_j)/\beta_{n+1} A^j
\end{align*}

and analogously for odd $n$
\begin{align*}
U_{n+1} =& (AU_n - \beta_n U_{n-1})/\beta_{n+1} \\
=& \left(A\sum_{j \in 2 \mathbb{N} - 1 \leq n} c_j A^j - \beta_n \sum_{j \in 2 \mathbb{N}_0 \leq n-1} d_j A^j\right)/\beta_{n+1} \\
=& \left(\sum_{j \in 2 \mathbb{N}_0 \leq n+1} c_j A^j - \beta_n \sum_{j \in 2 \mathbb{N}_0 \leq n-1} d_j A^j\right)/\beta_{n+1} \\
=& \sum_{j \in 2 \mathbb{N}_0 \leq n+1} (c_j-\beta_n d_j)/\beta_{n+1} A^j
\end{align*}

where we defined $d_{n+1} \coloneqq 0$.
\end{proof}

We can now use this result to obtain a more profound insight.

\begin{alphas}
If $A \in \mathbb{C}^{N \times N}$ has a spectrum that is point-wise symmetric around zero and $U_0 \in \mathbb{C}^{N \times N} = I_N/\beta_0$, all the $\alpha_i, i \in \{1,\cdots,K\}$ as computed by the algorithm are zero.
\end{alphas}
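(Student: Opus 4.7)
The plan is to run a joint induction on $n$, establishing simultaneously that $\alpha_{n+1} = 0$ and that $U_{n+1}$ is a polynomial in $A$ whose only nonzero monomials are powers of $A$ of the same parity as $n+1$. A direct appeal to the preceding polynomials theorem is not available, because its hypothesis is exactly the conclusion we want; conversely, ruling out $\alpha_{n+1}$ seems to require knowing the polynomial structure of $U_n$. The two statements must therefore be coupled inside a single induction.

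The only property of $A$ that I would actually use is that a spectrum point-wise symmetric around zero forces $\trace A^{2k+1} = 0$ for every $k \geq 0$, since in the sum of odd powers each eigenvalue $\lambda$ is cancelled by its partner $-\lambda$. This takes care of the base case: $U_0 = I_N/\beta_0$ is trivially an even polynomial in $A$, and $\alpha_1 = \trace U_0^* A U_0 = \trace A/\beta_0^2 = 0$, so that the update rule gives $U_1 = AU_0/\beta_1 = A/(\beta_0\beta_1)$, which has only the monomial $A^1$ and hence the expected odd parity.

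For the inductive step I would assume both claims up to index $n$ and recall from the algorithm that $\alpha_{n+1} = \trace U_n^* A U_n$, the cross term $\beta_n \trace U_n^* U_{n-1}$ having dropped by orthogonality of the basis. Writing $U_n = \sum_j c_j A^j$ with every $j$ of the same parity as $n$, bilinearity yields
\[
\alpha_{n+1} = \sum_{j,k} \overline{c_j}\, c_k \trace A^{j+k+1},
\]
and since $j+k$ is even in every surviving term, the exponent $j+k+1$ is always odd, so the spectral symmetry forces each summand to vanish. With $\alpha_{n+1} = 0$ in hand, the update rule collapses to $U_{n+1} = (AU_n - \beta_n U_{n-1})/\beta_{n+1}$; multiplying $U_n$ by $A$ shifts the parity of its exponents by one, which matches the parity already carried by $U_{n-1}$, so $U_{n+1}$ inherits parity $n+1$ and the induction closes.

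The main obstacle is really just to spot the circular dependency with the polynomials theorem and dissolve it through the coupled induction above; thereafter the parity bookkeeping and the repeated appeal to the vanishing of traces of odd powers of $A$ are routine. A subsidiary remark is that the trace computation does not require the coefficients $c_j$ to be real, so the Hermiticity of $U_n$ guaranteed by the hermitian corollary is convenient but not strictly needed for the argument to go through.
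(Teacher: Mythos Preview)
Your proof is correct and follows essentially the same approach as the paper: both argue by induction, use the parity-polynomial structure of $U_n$ to expand $\alpha_{n+1}$ as a linear combination of traces of odd powers of $A$, and invoke the spectral symmetry to make every such trace vanish. Your explicit coupling of the two claims in a single induction is in fact cleaner than the paper's version, which appeals to the preceding polynomials theorem ``via the inductive hypothesis'' without spelling out that only its partial, step-by-step form is actually available at that point---you correctly identified and dissolved the apparent circularity.
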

\begin{proof}
As done previously, we prove this statement by induction over the iteration number $n$.
We start by observing that by assumption $\trace A = 0$ and hence $\alpha_1 = \trace A/\beta_0^2 = 0$.
Consequentially, we have that $\alpha_2 = \trace (AU_1)^*U_1 = 1/(\beta_1 \beta_0)^2 \trace A^3 = 0$ which is our inductive basis.

Now for the inductive step, we begin by noting that it follows from the inductive hypothesis that all $U_i, i \in \{1,\cdots,n\}$ are polynomials of the form defined in the previous statement. 
Then for odd $n$, it follows that

\begin{align*}
\alpha_{n+1} =& \trace U_n U_n^* A \\
=& \trace \left(\sum_{j \in 2 \mathbb{N}-1 \leq n} c_j A^j\right) \left(\sum_{j \in 2 \mathbb{N}-1 \leq n} c_j A^{*^j}\right)A \\
=& \sum_{j \in 2 \mathbb{N}+1 \leq 2n+1} c_j \trace A^j \\
=& 0.
\end{align*}

Analogously, it follows for even $n$ that

\begin{align*}
\alpha_{n+1} =& \trace U_n U_n^* A \\
=& \trace \left(\sum_{j \in 2 \mathbb{N}_0 \leq n} c_j A^j\right) \left(\sum_{j \in 2 \mathbb{N}_0 \leq n} c_j A^{*^j}\right)A \\
=& \sum_{j \in 2 \mathbb{N}-1 \leq 2n+1} c_j \trace A^j \\
=& 0.
\end{align*}
\end{proof}

From this finding, we finally obtain the following corollary.

\begin{orth_algo}
	For $A \in \mathbb{C}^{N \times N}$ having a spectrum point-wise symmetric around zero, Algorithm~\ref{approx} produces a bidiagonal matrix \begin{equation*}
T_K = \left[ \begin{matrix}
0 & \beta_1 & & \mathbf{0}\\
\beta_1 & 0 & \ddots & \\
& \ddots & \ddots & \beta_K \\
\mathbf{0} & & \beta_K & 0 \end{matrix} \right].
\end{equation*}
\end{orth_algo}
This insight yields on one hand a more efficient version of the algorithm as each $U_i$ is in theory guaranteed to be orthogonal to $U_{i-1}$ and hence only one orthogonalization has to be performed in each iteration of the algorithm. 
Since orthogonalizations of MPOs require solving an optimization problem and are hence significantly more computationally demanding than the orthogonalization of full matrices, avoiding them results in a measurable reduction of the runtime as we will illustrate later.
%Furthermore, since $T_K$ is symmetric, we also know that the inner product of $U_i$ and $U_{i-2}$ must be $\beta_{i-1}= ||U_{i-1}||_F$ and thus we do not need to compute the inner product between $U_i$ and $U_{i-2}$ before the orthogonalization.
On the other hand, we obtain yet another means of checking for the effect of truncation errors by monitoring the magnitude of the $\alpha_i$ when it is known they must be zero.
It is of course also possible to still orthogonalize against the previous two basis MPOs but always set $\alpha_i=0$ to increase the approximations accuracy.
%and computing the difference between $\beta{i-1}$ and the actual overlap between $U_I$ and $U_{i-2}$.
However, it is worth noting that the condition of the $\alpha_i$ being equal to zero is necessary but not sufficient for the overall approximation of $\trace f(A)$ to be accurate. The deviation from zero of the $\alpha_i$ does not allow us to draw strong conclusions about the accuracy of the approximation of $\trace f(A)$.
To illustrate this point, we add a few remarks.
\begin{itemize}
	\item Although it seems reasonable to assume that when the $\|\alpha_i \|$ remain small the approximated values of $\beta_i$ are also close to their true values, we have no way of inferring the error in the $\beta_i$ from the error in the $\alpha_i$. 
		This is mainly the case because we do not have access to the true values of the $\beta_i$ and we believe it not to be possible to establish an analytical practically relevant connection between both errors in our algorithm, especially when truncations come into play.
	\item If however we find some $\alpha_i$ to be significantly larger in magnitude than zero, we know that the respective two basis MPOs are not orthogonal as they should be which usually leads to the reocurrence of previously observed approximated eigenvalues.
Although we know that the accuracy of the overall approximation will suffer from this, it is unfortunately not possible to make a more precise statement as we cannot tell in detail how a deviation from zero relates to amount and magnitude of such `ghost' eigenvalues and again we do not know the error in the $\beta_i$.
	\item Although in principle one could counter growing magnitudes of the $\alpha_i$ by increasing $D$ of the basis MPOs and (re)orthogonalizing, in practice the bond dimension of the basis MPOs reaches $D_{max}$ after already a few iterations and by definition we cannot exceed this value.
	Still, it would be possible to either restart a failed run with a larger maximal bond dimension or increase it dynamically until the $\alpha_i$ become small enough.
	The latter approach however could be argued to defeat the purpose of the $D_{max}$ parameter.
\end{itemize}

\section{Spectra of Hamiltonians}
\label{hamiltonians}
The results obtained in the previous section naturally raise the question what kinds of matrices exhibit a spectrum point wise symmetric around zero and how many cases of relevance there are.
While we cannot give a general answer to this question, we can provide a partial answer for a specific application, namely spin systems in quantum physics. 
These systems are often studied analytically and numerically because they exhibit interesting physical phenomena while still allowing for the derivation of mathematically rigorous results and comparably efficient simulations by tensor network approaches.

Spin systems are described by their corresponding Hamiltonians which for open boundaries and interactions between direct neighbours take the form 
\[
H_{OBC} = \sum_{(i,\alpha) \in \mathcal{I}} \sum^{L-i}_{j=0} h_{ij\alpha} I^{\otimes j} \otimes \sigma_{\alpha}^{\otimes i} \otimes I^{\otimes L-i-j}
\]
where $L \in \mathbb{N}$ is the number of spin particles and $\mathcal{I}$ is a set of tuples $(i, \alpha) \in \mathbb{N}_L \times \{x,y,z\}$ denoting the number of consecutive applications of $\sigma_{\alpha}$.
In this case, $\sigma_{x,y,z}$ denote the Pauli matrices 
\begin{equation*}
\sigma_x = \left[ \begin{matrix}
0 & 1 \\
1 & 0 \\ \end{matrix} \right],
\sigma_y = \left[ \begin{matrix}
0 & -i \\
i & 0 \\ \end{matrix} \right],
\sigma_z = \left[ \begin{matrix}
1 & 0 \\
0 & -1 \\ \end{matrix} \right].
\end{equation*} 
and the $h_{ij\alpha} \in \mathbb{R}$ simply are scaling constants.
Similarly, the case of closed or periodic boundaries is expressed as
\[
H_{PBC} = \sum_{(i,\alpha) \in \mathcal{I}} \sum^{L-i}_{j=0} h_{ij\alpha} I^{\otimes j} \otimes \sigma_{\alpha}^{\otimes i} \otimes I^{\otimes L-i-j} + \sum_{(i,\alpha) \in \mathcal{I}} \sum^{i-1}_{k=1} h_{ik\alpha} \sigma_{\alpha}^{\otimes k} \otimes I^{\otimes L-i} \otimes \sigma_{\alpha}^{\otimes i-k}.
\]
We will in the following denote the individual terms in the sums of the Hamiltonians as interaction terms and refer to the products of multiple Pauli matrices inside these interaction terms as blocks. 
This terminology is derived from the fact that each term describes the interaction between the particles at whose position there is a Pauli operator in the product.
While the formulations introduced above naturally do not describe all possible Hamiltonians, they cover many interesting cases which are furthermore treatable via tensor network methods. 
This typically gets much more difficult for cases of arbitrary and long-range interaction patterns, which are not covered by the above expressions.

Now, one sufficient condition for the existence of a point-wise symmetric spectrum around zero looks as follows: for a given $H \in \mathbb{C}^{2^L \times 2^L}$, there exists a unitary and Hermitian matrix $R$ of equal size, such that
\[
RH = -HR
\]
and consequentially by the standard eigenvalue formulation $Hv=\lambda v$ it holds that 
\[
H(Rv)=-\lambda (Rv).
\]
In the following, we will make statements about the existence of such an $R$ for several classes of spin Hamiltonians.
Although absence of such an $R$ does not imply that the Hamiltonian in question does not exhibit a point symmetric spectrum around zero, the above formulation captures a large class of possible symmetries and thus poses a relevant albeit not final characterization of point symmetric spectra.
Note that in quantum physics it is already known that one can make use of the rotation transformation properties of spin operators to change the sign of particular terms in a Hamiltonian~\cite{sakurai1995modern}. 
However, here we tackle the problem of changing all terms, from now on also called interaction terms, in a Hamiltonian to relate different eigenvalues/-states to each other and our focus lies on formally defining classes of spin Hamiltonians for which such an $R$ exists and which hence are valid inputs for our improved algorithm.
A related mathematical discussion for general square matrices was presented by Fassbender et.\ al.~\cite{fassbender2006structured}.

Before we start, we remind ourselves that the Pauli matrices are Hermitian and unitary and that each pair of Pauli matrices anticommutes such that for $\alpha, \beta \in \{x,y,z\}$ it holds
\[
\{\sigma_{\alpha},\sigma_{\beta}\} = 2\delta_{\alpha, \beta}I.
\]
Furthermore, we note that the Kronecker product of Hermitian and unitary matrices is again Hermitian and unitary. 
These properties will be used in all following proofs.

We start by considering Hamiltonians with open boundaries and neighbour interactions of arbitrary length for a single Pauli operator.
\begin{single_obc}
	\label{sing_obc}
	For every spin Hamiltonian with open boundaries of the form
	\[
	H_{OBC, \alpha, i} = \sum_{j=0}^{L-i} h_j I^{\otimes j} \otimes \sigma_{\alpha}^{\otimes i} \otimes I^{\otimes L -i -j}
	\]
	where $\alpha \in \{x,y,z\}$ and $i, L \in \mathbb{N}$ and $i \leq L$, there exists a unitary $R \in \mathbb{C}^{2^L \times 2^L}$ such that $RH_{OBC, \alpha, i} = -H_{OBC, \alpha, i}R$.
\end{single_obc}
\begin{proof}
	%We note that the Pauli matrices are unitary and that they anticommute pairwise. 
	%Thus for every $\alpha \in \{x,y,z\}$ we can choose an $\alpha^{\prime} \in \{x,y,z\} \setminus \{\alpha\}$ such that $\sigma_{\alpha} \sigma_{\alpha^{\prime}} = -\sigma_{\alpha^{\prime}} \sigma_{\alpha}$.
	We can construct $R=\left(I^{\otimes i-1} \otimes \sigma_{\alpha^{\prime}}\right)^{\otimes L/i} \otimes I^{\otimes L\%i}$ with $\alpha^{\prime }\neq \alpha$  as we need only apply one $\sigma_{\alpha^{\prime}}$ for each block $\sigma_{\alpha}^{\otimes i}$ to change the sign in every term of the sum in $H_{\alpha, i}$ and thereby ultimately the sign of $H_{\alpha, i}$ itself. Hereby, $\otimes^{L/i}$ denotes the repition of the given expression for $L/i$ times whereas $I^{\otimes L \% i}$ simply refers to a `padding' of $R$ to the required length $L$.  
	%By observing that the Kronecker product of unitary matrices and $I$ is again unitary, we finally obtain that $R$ is unitary.
\end{proof}

While for the case of open boundaries and a single Pauli matrix the statement is quite universal, the additional structure introduced by periodic boundaries forces us to restrict the statement to odd interactions lengths.

\begin{single_pbc_uneven}
	For every spin Hamiltonian with periodic boundaries of the form
	\[
	H_{PBC, \alpha, i} = \sum_{j=0}^{L-i} h_j I^{\otimes j} \otimes \sigma_{\alpha}^{\otimes i} \otimes I^{\otimes L -i -j} + \sum_{k=1}^{i-1} h_k \sigma_{\alpha}^{\otimes k} \otimes I^{\otimes L-i} \otimes \sigma_{\alpha}^{i-k}
	\]
	where $\alpha \in \{x,y,z\}$ and $i \in 2\mathbb{N}-1$, $L \in \mathbb{N}$ and $i \leq L$, there exists a unitary $R \in \mathbb{C}^{2^L \times 2^L}$ such that $RH_{PBC, \alpha, i} = -H_{PBC, \alpha, i}R$.
\end{single_pbc_uneven}

\begin{proof}	
	%We note that the Pauli matrices anticommute pairwise and that they are unitary. Hence, for every $\alpha \in \{x,y,z\}$ we can choose an $\alpha^{\prime} \in \{x,y,z\} \setminus \{\alpha\}$ such that $\sigma_{\alpha} \sigma_{\alpha^{\prime}} = -\sigma_{\alpha^{\prime}} \sigma_{\alpha}$.
	By defining $R = \sigma_{\alpha^{\prime}}^{\otimes L}$ with $\alpha^{\prime }\neq \alpha$ we obtain an odd number of sign changes in every term of the sum in $H_{PBC, \alpha, i}$ inducing a sign change of $H_{PBC, \alpha, i}$. 
	%As $R$ is the Kronecker power of a unitary matrix, $R$ furthermore is unitary.
\end{proof}

These two statements together show that a large subset of spin Hamiltonians with interaction terms involving only one particular Pauli matrix exhibit a point symmetric spectrum around zero. 
Not surprisingly, the situation becomes more involved when considering Hamiltonians with up to two different Pauli operators and differing interaction lengths.
We first examine the case of interaction terms involving two differing Pauli operators.

\begin{double_obc_different}
	For every spin Hamiltonian with open boundaries of the form
	\[
	H_{OBC, \alpha,\beta, i,k} = \sum_{j=0}^{L-i} h_{\alpha j} I^{\otimes j} \otimes \sigma_{\alpha}^{\otimes i} \otimes I^{\otimes L -i -j} + \sum_{l=0}^{L-k} h_{\beta l} I^{\otimes l} \otimes \sigma_{\beta}^{\otimes k} \otimes I^{\otimes L -k -l}
	\]
	where $\alpha, \beta \in \{x,y,z\}$, $\alpha \neq \beta$, $i, k, L \in \mathbb{N}$ and $i, k \leq L$, there exists a unitary $R \in \mathbb{C}^{2^L \times 2^L}$ such that $RH_{OBC, \alpha, \beta, i, k} = -H_{OBC, \alpha, \beta, i, k}R$.
	
\end{double_obc_different}

\begin{proof}
	%As in the previous proofs, we begin by noting that the Pauli matrices anticommute pairwise and that they are unitary. 
We have to distinguish two cases regarding the relation of $i$ and $k$.

\textbf{Case $i = k$}

In this case there is a $\gamma \in \{x,y,z\} \setminus \{\alpha, \beta\}$ such that $\sigma_{\alpha}\sigma_{\gamma} = -\sigma_{\gamma}\sigma_{\alpha}$ and $\sigma_{\beta}\sigma_{\gamma} = -\sigma_{\gamma}\sigma_{\beta}$. 
Hence, we can again define $R = \left( I^{\otimes i-1} \otimes \sigma_{\gamma} \right)^{\otimes L / i} \otimes I^{\otimes L\%i}$ to obtain a unitary that induces a sign change in every block $\sigma_{\alpha}^{\otimes i}$ and $\sigma_{\beta}^{\otimes k}$ and hence changes the sign of $H_{OBC, \alpha,\beta, i,k}$.

%\textbf{Case $i \neq k, i>1, k>1$}
%
%Let w.l.o.g. $i > k$. 
%Then, we can define $R = \left( I^{k-1} \otimes \sigma_{\alpha} \otimes I^{i-k-1} \otimes \sigma_{\beta} \right)^{\otimes L \% i}$. 
%Due to the unitarity of the Pauli matrices, $\sigma_{\alpha}$ and $\sigma_{\beta}$ do not induce a sign change in the blocks $\sigma_{\alpha}^{\otimes i}$ and $\sigma_{\beta}^{\otimes k}$ but do change the sign of the other block respectively. We thus obtain that $R$ changes the sign of $H_{OBC, \alpha,\beta, i,k}$ and is unitary because of the unitarity of the Pauli matrices.
%
%\textbf{Case $i \neq k, i = 1$ or $k = 1$}
%
%Let w.l.o.g. $i > k$. 
%Then we choose $R = \left( \sigma_{\alpha}^{i-1} \otimes \sigma_{\gamma} \right)^{\otimes L \% i}$ where $\gamma \in \{x,y,z\} \setminus \{\alpha, \beta\}$. While both $\sigma_{\alpha}$ and $\sigma_{\gamma}$ change the sign of every term containing $\sigma_{\beta}$, only $\sigma_{\gamma}$ induces a sign change in the terms containing $\sigma_{\alpha}^{\otimes i}$. Thus, we obtain a sign change in the whole Hamiltonian. Again by construction, $R$ is unitary.
%
\textbf{Case $i \neq k$}

Let w.l.o.g. $i > k$.
Then we can construct $R$ as the Kronecker product of $L$ matrices such that at every $k$-th position we apply $\sigma_{\alpha}$ and at every $i$-th position we apply $\sigma_{\beta}$. 
In the case where multiples of $i$ and $k$ coincide, we again choose $\gamma \in \{x,y,z\} \setminus \{\alpha, \beta\}$ and apply it in these positions. 
The remaining free factors are again chosen to be the identity.
%As $R$ is the Kronecker product of unitary matrices, we again obtain that $R$ is unitary. 
It is evident from the construction of $R$ that it induces exactly one sign change in every block $\sigma_{\alpha}^{\otimes i}$ and $\sigma_{\beta}^{\otimes k}$ respectively.

\end{proof}

A different situtation presents itself when we again restrict the interaction terms in the Hamiltonian to involve only one Pauli operator but allow two different interaction lengths.

\begin{double_obc_equal}
	For every spin Hamiltonian with open boundaries of the form
	\[
	H_{OBC, \alpha, i,k} = \sum_{j=0}^{L-i} h_{ij} I^{\otimes j} \otimes \sigma_{\alpha}^{\otimes i} \otimes I^{\otimes L -i -j} + \sum_{l=0}^{L-k} h_{kl} I^{\otimes l} \otimes \sigma_{\alpha}^{\otimes k} \otimes I^{\otimes L -k -l}
	\]
	where $\alpha \in \{x,y,z\}$, $i, k \in 2\mathbb{N}-1, L \in \mathbb{N}$, and $i, k \leq L$, there exists a unitary $R \in \mathbb{C}^{2^L \times 2^L}$ such that $RH_{OBC, \alpha, i, k} = -H_{OBC, \alpha, i, k}R$.
\end{double_obc_equal}

\begin{proof}
	%Like above, we begin by noting that the Pauli matrices anticommute pairwise and that they are unitary. 
We again have to distinguish two cases regarding the relation of $i$ and $k$.

\textbf{Case $i = k$}

In this case, the Hamiltonian is a member of the class considered in Theorem~\ref{sing_obc}.

\textbf{Case $i \neq k$}

%Let w.l.o.g. $i > k$. 
In this case, we can again choose an $\alpha^{\prime }\neq \alpha$ and define $R = \sigma_{\alpha^{\prime}}^{\otimes L}$. 
$R$ then induces an odd number of sign changes in every term of $H_{OBC, \alpha, i,k}$ and consequentially a sign change in the whole Hamiltonian.

\end{proof}

This result can now easily be generalized to the case of more than two interaction lengths for a fixed Pauli operator.

\begin{double_obc_gen}
	By a straight forward generalization of the above proof we obtain that for all Hamiltonians with open boundaries and one Pauli operator of the form 
\begin{align*}
H_{PBC, \alpha} &= \sum_{i \in \mathcal{I}} \sum^{L-i}_{j=0} h_{ij} I^{\otimes j} \otimes \sigma_{\alpha}^{\otimes i} \otimes I^{\otimes L-i-j}
\end{align*}
where $\alpha \in \{x,y,z\}$ and $\mathcal{I} \subset 2\mathbb{N}-1$,  there exists a unitary $R \in \mathbb{C}^{2^L \times 2^L}$ such that $RH_{PBC, \alpha} = -H_{PBC, \alpha}R$.
\end{double_obc_gen}

While we restricted the interaction lengths to be odd for the statements above, we find that there exists another case for arbitrary interaction lengths with a certain relation between them.

\begin{double_obc_div}
	For every spin Hamiltonian with open boundaries of the form
	\[
	H_{OBC, \alpha, i,k} = \sum_{j=0}^{L-i} h_{ij} I^{\otimes j} \otimes \sigma_{\alpha}^{\otimes i} \otimes I^{\otimes L -i -j} + \sum_{l=0}^{L-k} h_{kl} I^{\otimes l} \otimes \sigma_{\alpha}^{\otimes k} \otimes I^{\otimes L -k -l}
	\]
	where $\alpha \in \{x,y,z\}$,$i, k \in \mathbb{N}, i/k \in 2\mathbb{N}-1, L \in \mathbb{N}$, and $i, k \leq L$, there exists a unitary $R \in \mathbb{C}^{2^L \times 2^L}$ such that $RH_{OBC, \alpha, i, k} = -H_{OBC, \alpha, i, k}R$.
	
\end{double_obc_div}

\begin{proof}
	%As for the above statements, we begin by noting that the Pauli matrices anticommute pairwise and that they are unitary. 
Also here, we have to distinguish two cases regarding the relation of $i$ and $k$.
	
\textbf{Case $i = k$}

In this case, the Hamiltonian is a member of the class considered in Theorem~\ref{sing_obc}.

\textbf{Case $i \neq k$}

We can construct $R = \left( I^{\otimes k-1} \otimes \sigma_{\alpha^{\prime}}  \right)^{\otimes L / k} \otimes I^{\otimes L \% k}$ with $\alpha^{\prime }\neq \alpha$ . 
Since $i/k \in 2\mathbb{N}-1$ we find that $R$ induces an odd number of sign changes in all terms of $H_{OBC, \alpha, i,k}$ and thus in the overall Hamiltonian. 
%By construction, it is again unitary.
\end{proof}

What is now left to discuss for Hamiltonians involving up to two different Pauli operators is the case of periodic boundaries, which again introduces more constraints. 
Hence we find that we can only make a positive statement about odd interaction lengths as follows.

\begin{double_pbc_odd}
	For every spin Hamiltonian with periodic boundaries of the form
	\begin{align*}
		H_{PBC, \alpha, \beta, i, l} &= \sum_{j=0}^{L-i} h_{\alpha j} I^{\otimes j} \otimes \sigma_{\alpha}^{\otimes i} \otimes I^{\otimes L -i -j} + \sum_{k=1}^{i-1} h_{\alpha k} \sigma_{\alpha}^{\otimes k} \otimes I^{\otimes L-i} \otimes \sigma_{\alpha}^{i-k} \\
		&+ \sum_{m=0}^{L-l} h_{\beta m} I^{\otimes m} \otimes \sigma_{\beta}^{\otimes l} \otimes I^{\otimes L -l -m} + \sum_{n=1}^{l-1} h_{\beta n} \sigma_{\beta}^{\otimes n} \otimes I^{\otimes L-l} \otimes \sigma_{\beta}^{l-n}
	\end{align*}
	where $\alpha, \beta \in \{x,y,z\}$, $i, l \in 2\mathbb{N}-1$, $L \in \mathbb{N}$ and $i, l \leq L$, there exists a unitary $R \in \mathbb{C}^{2^L \times 2^L}$ such that $RH_{PBC, \alpha, i} = -H_{PBC, \alpha, i}R$.
	
\end{double_pbc_odd}

\begin{proof}
	As before we can choose $\gamma \in \{x,y,z\} \setminus \{\alpha, \beta\}$ and define $R = \sigma_{\gamma}^{\otimes L}$. 
	Since $i, l \in 2\mathbb{N}-1$, $R$ induces and odd number of sign changes in ever term of and consequentially in $H_{PBC, \alpha, \beta, i, l}$.
\end{proof}

This statement can again be readily generalized to multiple interaction lengths and hence more complex Hamiltonians.

\begin{double_pbc_gen}
	By a straight forward generalization of the above proof we obtain that for all Hamiltonians with periodic boundaries and at most two different Pauli operators of the form 
\begin{align*}
H_{PBC, \alpha, \beta} &= \sum_{i \in \mathcal{I}} \sum^{L-i}_{j=0} h_{ij\alpha} I^{\otimes j} \otimes \sigma_{\alpha}^{\otimes i} \otimes I^{\otimes L-i-j} + \sum_{i \in \mathcal{I}} \sum^{i-1}_{k=1} h_{ik\alpha} \sigma_{\alpha}^{\otimes k} \otimes I^{\otimes L-i} \otimes \sigma_{\alpha}^{\otimes i-k}\\  
&+ \sum_{l \in \mathcal{J}} \sum^{L-l}_{j=0} h_{lj\beta} I^{\otimes j} \otimes \sigma_{\beta}^{\otimes l} \otimes I^{\otimes L-l-j} + \sum_{l \in \mathcal{J}} \sum^{l-1}_{k=1} h_{lk\beta} \sigma_{\beta}^{\otimes k} \otimes I^{\otimes L-l} \otimes \sigma_{\beta}^{\otimes l-k}.
\end{align*}
where $\alpha, \beta \in \{x,y,z\}$ and $\mathcal{I}, \mathcal{J} \subset 2\mathbb{N}-1$,  there exists a unitary $R \in \mathbb{C}^{2^L \times 2^L}$ such that $RH_{PBC, \alpha, \beta} = -H_{PBC, \alpha, \beta}R$.
\end{double_pbc_gen}

Now, we finally come to the case of Hamiltonians consisting of interaction terms generated by up to three Pauli operators which is clearly the most complicated setting. 
We begin by inspecting Hamiltonians with open boundaries involving all three Pauli matrices.

\begin{triple_obc_odd}
	For every spin Hamiltonian with open boundaries of the form
	\begin{align*}
		H_{OBC, \alpha, \beta, \gamma, i,k,m} &= \sum_{j=0}^{L-i} h_{\alpha j} I^{\otimes j} \otimes \sigma_{\alpha}^{\otimes i} \otimes I^{\otimes L -i -j} + \sum_{l=0}^{L-k} h_{\beta l} I^{\otimes l} \otimes \sigma_{\beta}^{\otimes k} \otimes I^{\otimes L -k -l} \\
		&+ \sum_{n=0}^{L-m} h_{\gamma n} I^{\otimes n} \otimes \sigma_{\gamma}^{\otimes m} \otimes I^{\otimes L -m -n}
	\end{align*}
	where $\alpha, \beta, \gamma \in \{x,y,z\}, \alpha \neq \beta \neq \gamma \neq \alpha$,$k, L \in \mathbb{N}, i,m \in 2\mathbb{N}-1, i<k$ and $i, k, m \leq L$, there exists a unitary $R \in \mathbb{C}^{2^L \times 2^L}$ such that $RH_{OBC, \alpha, \beta, \gamma, i, k, m} = -H_{OBC, \alpha, \beta, \gamma, i, k, m}R$.
	
\end{triple_obc_odd}

\begin{proof}
	We define $R=\left( \sigma_{\beta}^{k-1} \otimes \sigma_{\alpha} \right)^{\otimes L/k} \otimes \sigma_{\beta}^{\otimes L \% k}$. 
	It is clear that $R$ induces an odd number of sign changes in all blocks $\sigma_{\alpha}^{\otimes i}$ since $i \leq k-1$ is odd. 
	Similarly, it is obvious that $R$ causes exactly one sign change in every block $\sigma_{\beta}^{\otimes k}$ through the single $\sigma_{\alpha}$ in the product. 
	As both $\sigma_{\alpha}$ and $\sigma_{\beta}$ cause sign changes in the blocks $\sigma_{\gamma}^{\otimes m}$ and $m$ is odd, it is evident that $R$ also induces and odd number of sign changes in this case.
	Hence it holds that $RH_{OBC, \alpha, \beta, \gamma, i, k, m} = -H_{OBC, \alpha, \beta, \gamma, i, k, m}R$.

\end{proof}

Finally, we examine the case of two Pauli matrices and three different interaction lengths for open boundary conditions.

\begin{triple_obc_two}
	For every spin Hamiltonian with open boundaries of the form
	\begin{align*}
		H_{OBC, \alpha, \beta,i,k,m} &= \sum_{j=0}^{L-i} h_{\alpha j} I^{\otimes j} \otimes \sigma_{\alpha}^{\otimes i} \otimes I^{\otimes L -i -j} 
		+ \sum_{l=0}^{L-k} h_{\beta kl} I^{\otimes l} \otimes \sigma_{\beta}^{\otimes k} \otimes I^{\otimes L -k -l} \\
		&+ \sum_{n=0}^{L-m} h_{\beta mn} I^{\otimes n} \otimes \sigma_{\beta}^{\otimes m} \otimes I^{\otimes L -m -n}
	\end{align*}
	where $\alpha, \beta \in \{x,y,z\}, \alpha \neq \beta$,$L \in \mathbb{N}, i,k,m \in 2\mathbb{N}-1, i<k$ and $i, k, m \leq L$, there exists a unitary $R \in \mathbb{C}^{2^L \times 2^L}$ such that $RH_{OBC, \alpha, \beta, i, k, m} = -H_{OBC, \alpha, \beta, i, k, m}R$.
\end{triple_obc_two}

\begin{proof}
	As before we can choose $\gamma \in \{x,y,z\} \setminus \{\alpha, \beta\}$ and define $R = \sigma_{\gamma}^{\otimes L}$. 
	Since $i,k, l \in 2\mathbb{N}-1$, $R$ induces and odd number of sign changes in ever term of and consequentially in $H_{PBC, \alpha, \beta, i, l}$.
	
\end{proof}

This statement can now again be generalized to multiple interaction terms.

\begin{triple_obc_gen}
	Again by a straight forward generalization of the above proof we find that for all Hamiltonians with open boundaries and two Pauli operators of the form 
\begin{align*}
H_{PBC, \alpha, \beta} &= \sum_{i \in \mathcal{I}} \sum^{L-i}_{j=0} h_{ij\alpha} I^{\otimes j} \otimes \sigma_{\alpha}^{\otimes i} \otimes I^{\otimes L-i-j}
+\sum_{k \in \mathcal{J}} \sum^{L-k}_{j=0} h_{kj\beta} I^{\otimes j} \otimes \sigma_{\beta}^{\otimes k} \otimes I^{\otimes L-k-j}
\end{align*}
where $\alpha, \beta \in \{x,y,z\}$ and $\mathcal{I} \subset 2\mathbb{N}-1$,  there exists a unitary $R \in \mathbb{C}^{2^L \times 2^L}$ such that $RH_{PBC, \alpha} = -H_{PBC, \alpha}R$.
	
\end{triple_obc_gen}

To the best of our knowledge, we cannot make a positive statement for periodic boundaries and interaction terms involving all three Pauli operators.
As a remark, we would like to point out that in addition to the Hamiltonians treated in this section, positive statements about the existence of an $R$ as considered here should be easy to proof in a very similar way for arbitrary interactions, i.e.\ interactions not between nearest neightbours but arbitrary particles, and odd numbers of particles affected by the interaction terms.
Furthermore, as a special case of the Hamiltonian matrices discussed by Fassbender et.\ al.~\cite{fassbender2006structured} symmetric two-by-two block matrices of the form 
\[
\begin{bmatrix}
	B & C \\
	C & -B
\end{bmatrix}
\]
with $B,C \in \mathbb{R}^{N \times N}$  and $B, C$ symmetric generally exhibit a spectrum symmetric around zero and can thus be considered valid inputs to the presented variant of our method.
This of course is subject to the condition that they yield a sufficiently accurate and small MPO representation.
As a final remark, we note that positive definitive Bethe-Salpeter Hamiltonian matrices in principle also pose a valid input to the algorithm~\cite{benner2017fast, shao2015properties}.

We have shown in this section that a significant subset of all spin Hamiltonians exhibits a point symmetric spectrum around zero according to the introduced characterization and that consequentially there exists a strong use case of our improvement of Algorithm~\ref{approx} in quantum mechanical simulations. 
In the next section, we will now use a well known Hamiltonian belonging to this subset to numerically illustrate the advantage of the improved algorithm in this case.

\section{Numerical Evidence}
\label{numerics}

\begin{figure}[t]
	\centering
	\subfloat {\includegraphics[width=0.49\textwidth]{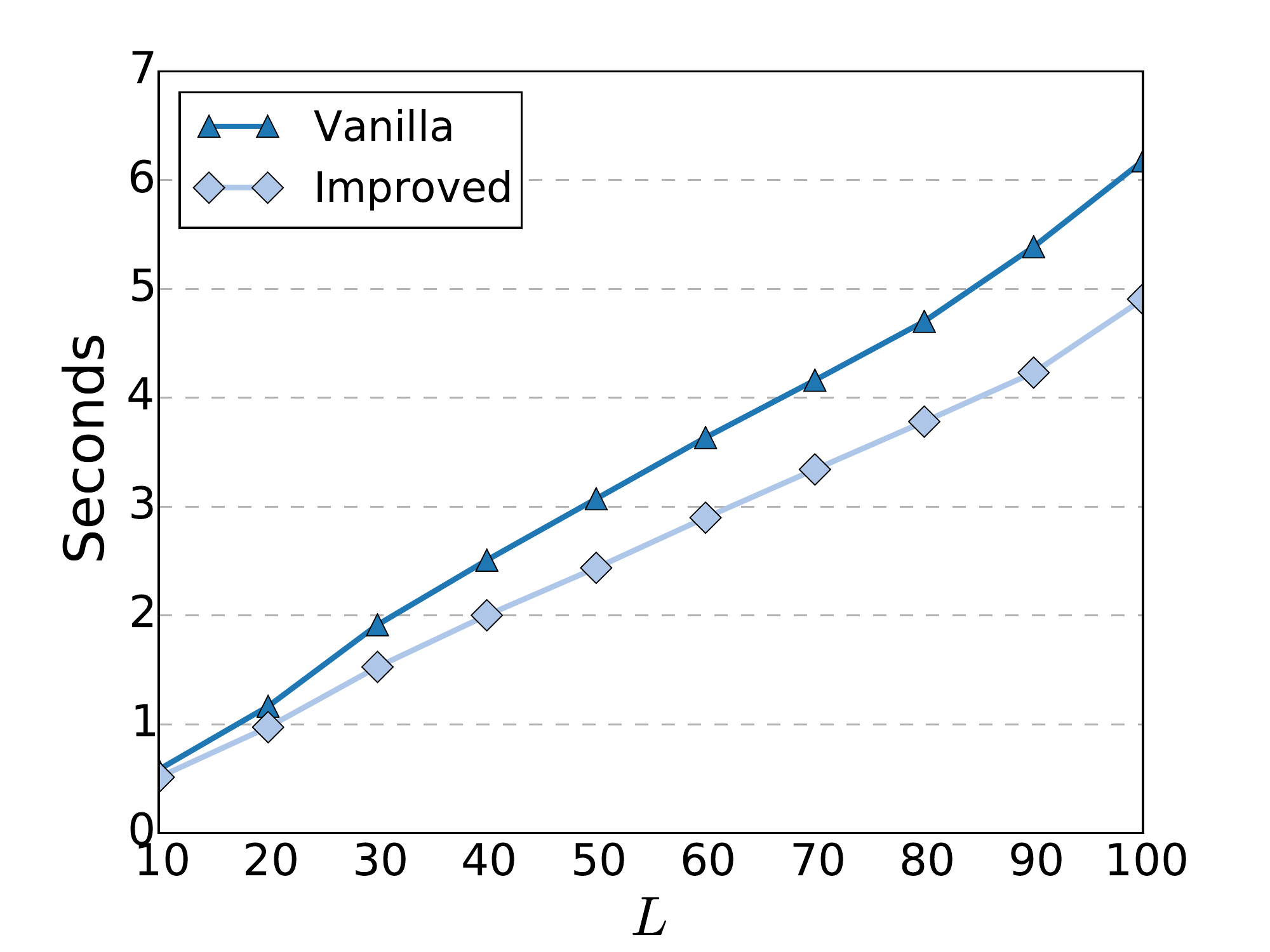}}	
	\subfloat {\includegraphics[width=0.49\textwidth]{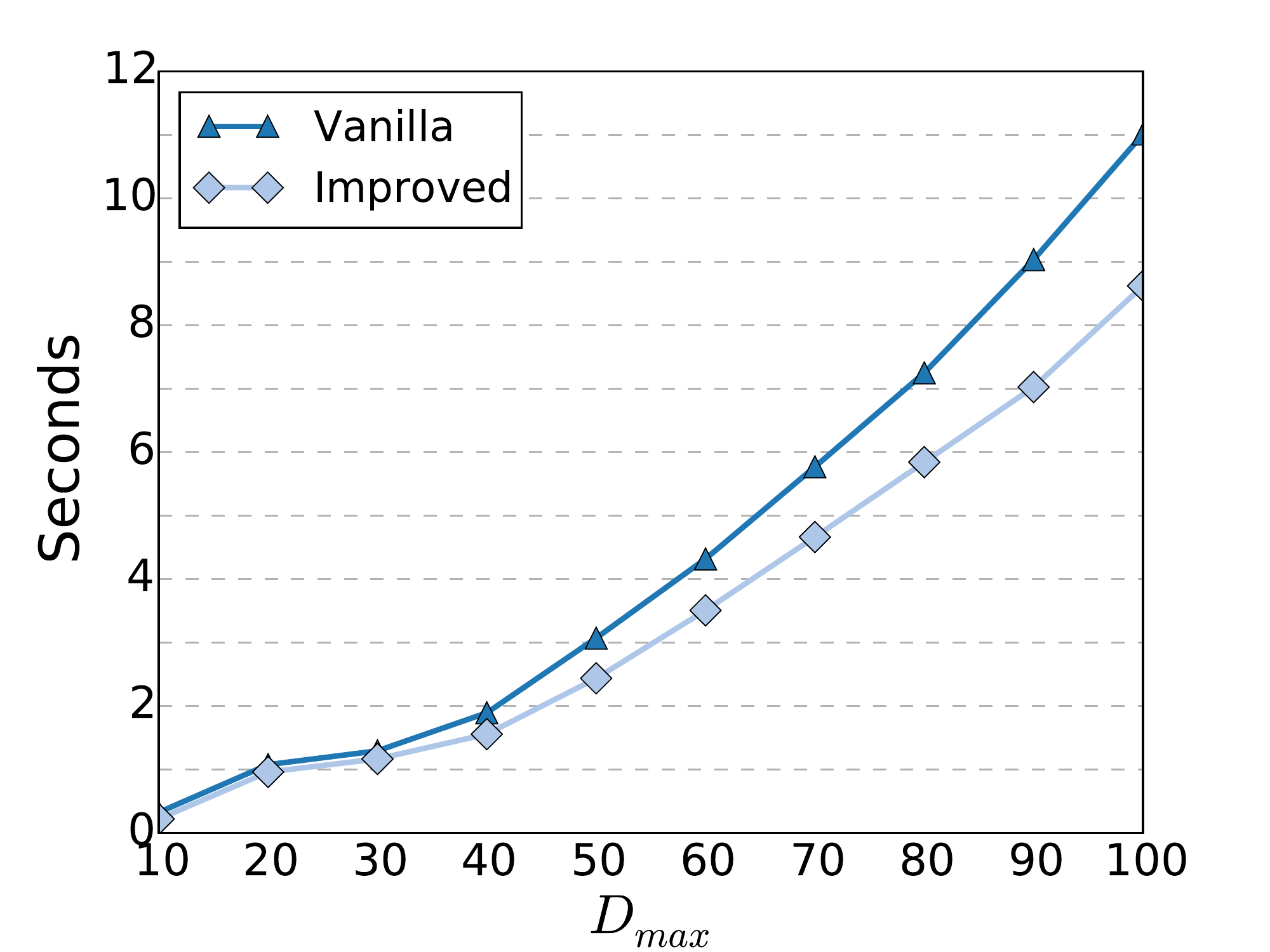}}
	\caption{Comparisons of the runtime in seconds between the improved and the vanilla version of the algorithm. Left: Comparison of average runtime of one iteration over $L$ with $D_{max}=50$. Right: Comparison of average runtime of one iteration over $D_{max}$ with $L=50$.}
\label{fig:runtime}
\end{figure}

\begin{figure}[htpb]
	\centering
	\subfloat {\includegraphics[width=0.49\textwidth]{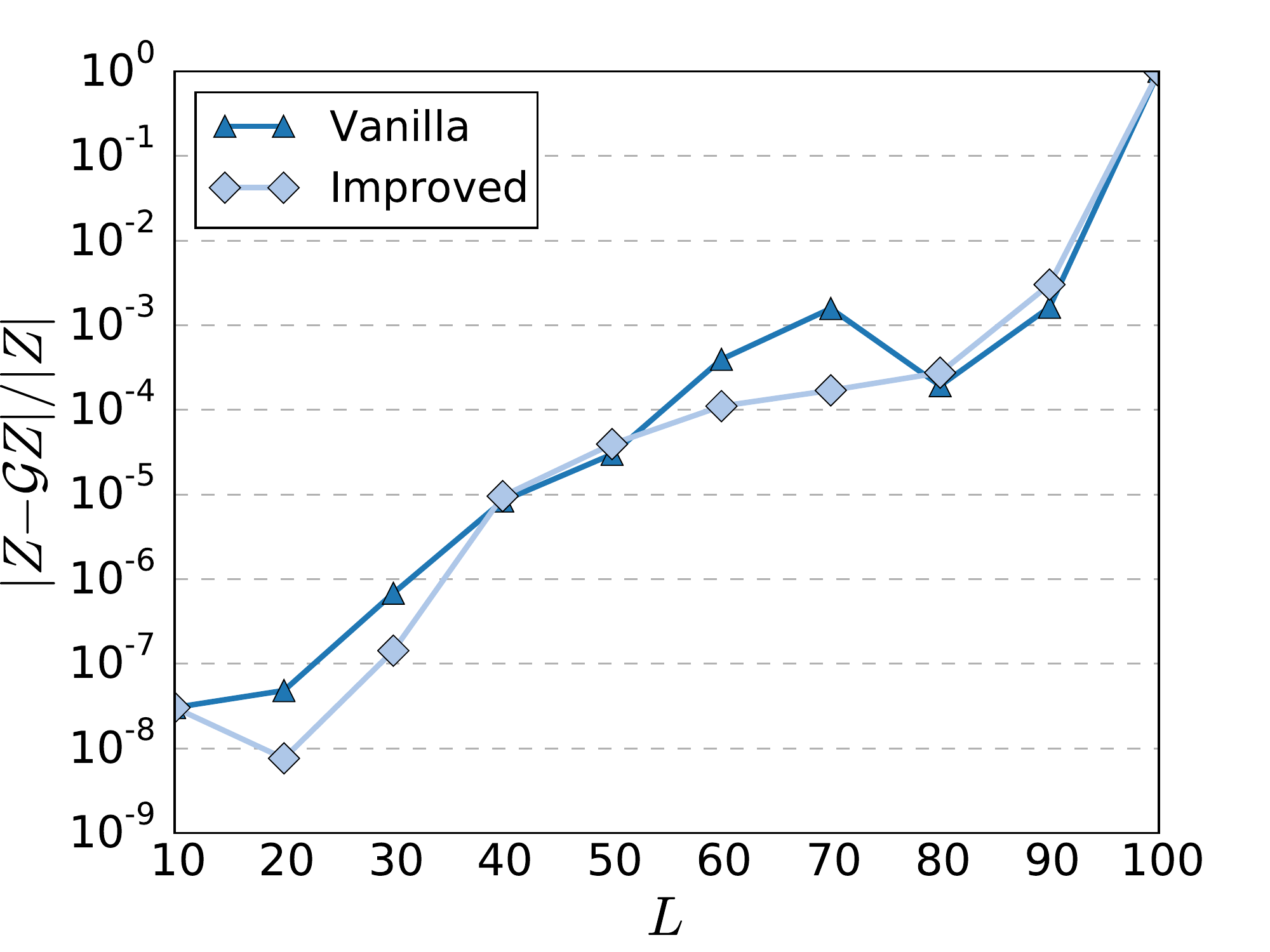}}	
	\subfloat {\includegraphics[width=0.49\textwidth]{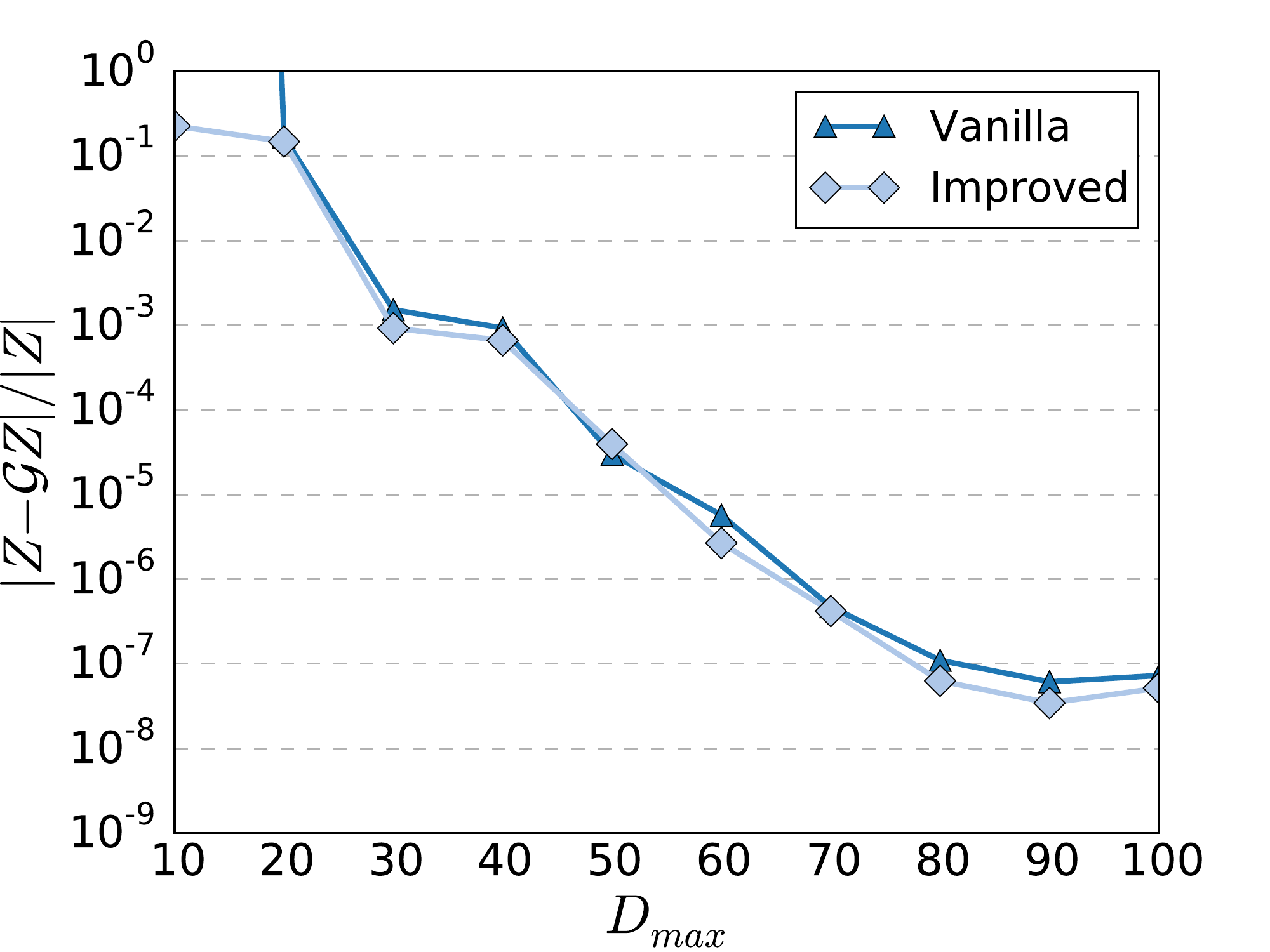}}
	\caption{Comparisons of the relative error in $Z$ between the improved and the vanilla version of the algorithm. Left: Comparison of the relative error over $L$ with $D_{max}=50$. Right: Comparison over the relative error over $D_{max}$ and $L=50$.}
\label{fig:convergence}
\end{figure}

\begin{figure}[htp b]
\center
	\subfloat {\includegraphics[width=0.49\textwidth]{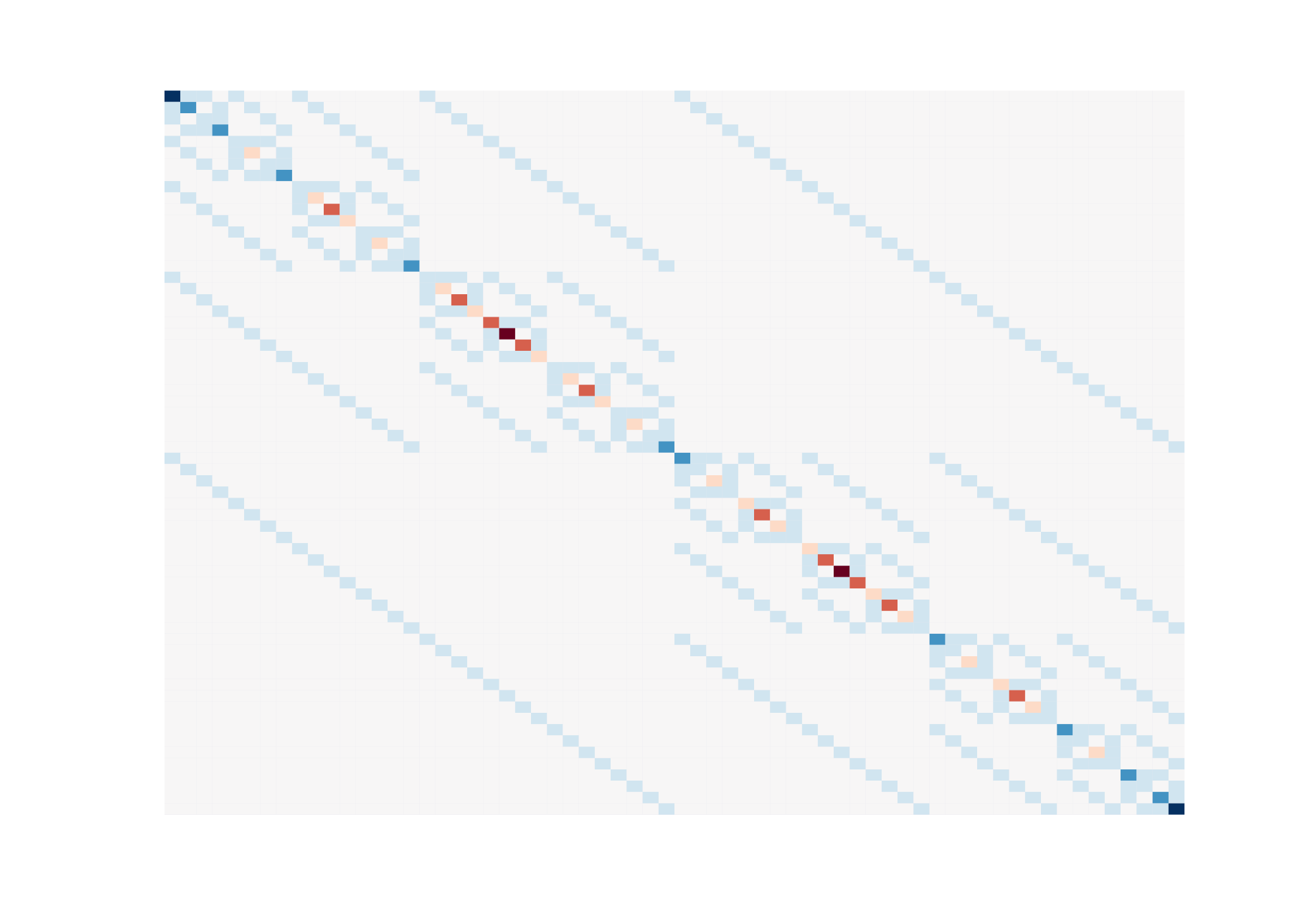}}	
	\subfloat {\includegraphics[width=0.49\textwidth]{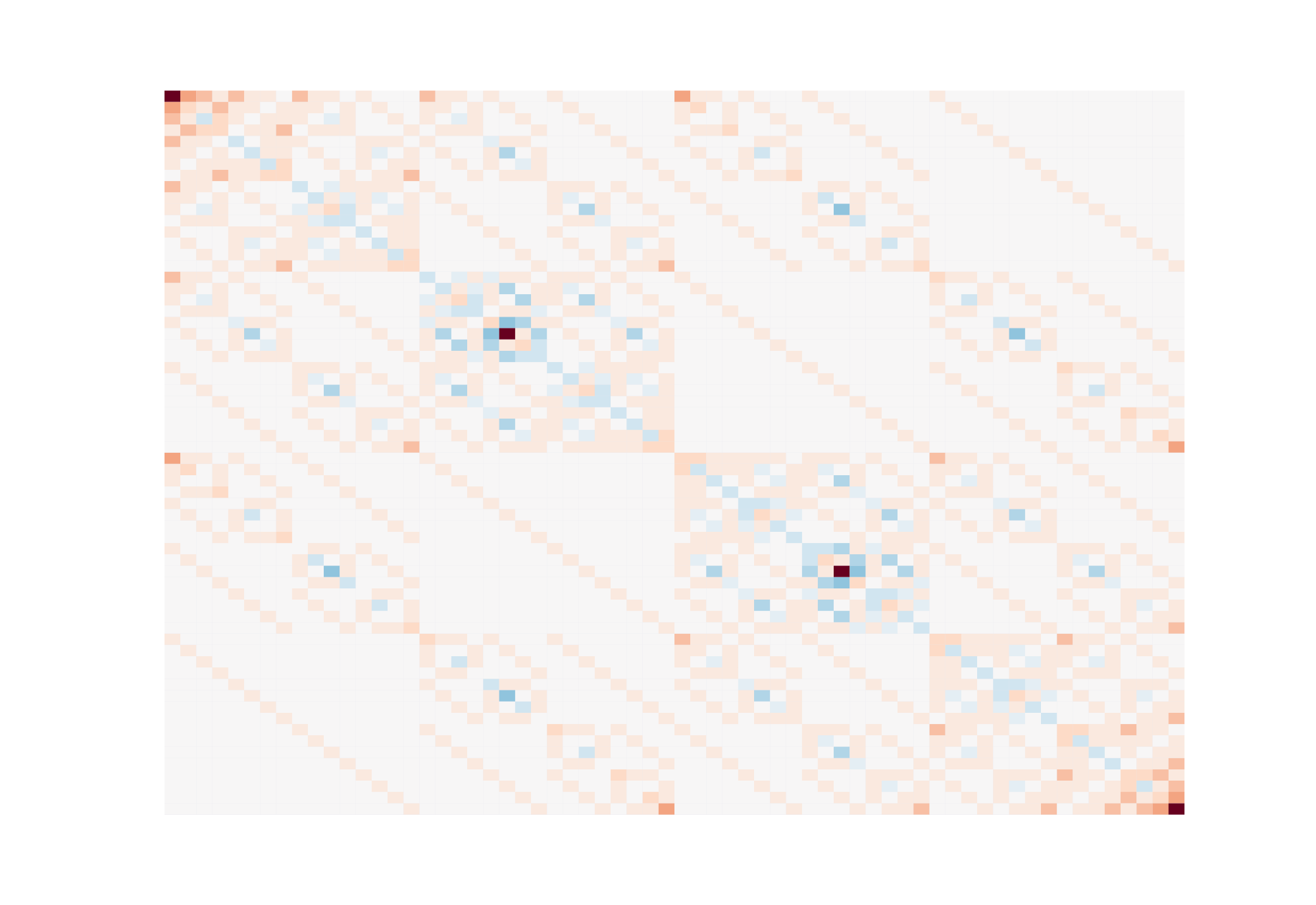}}
	
	\subfloat {\includegraphics[width=0.49\textwidth]{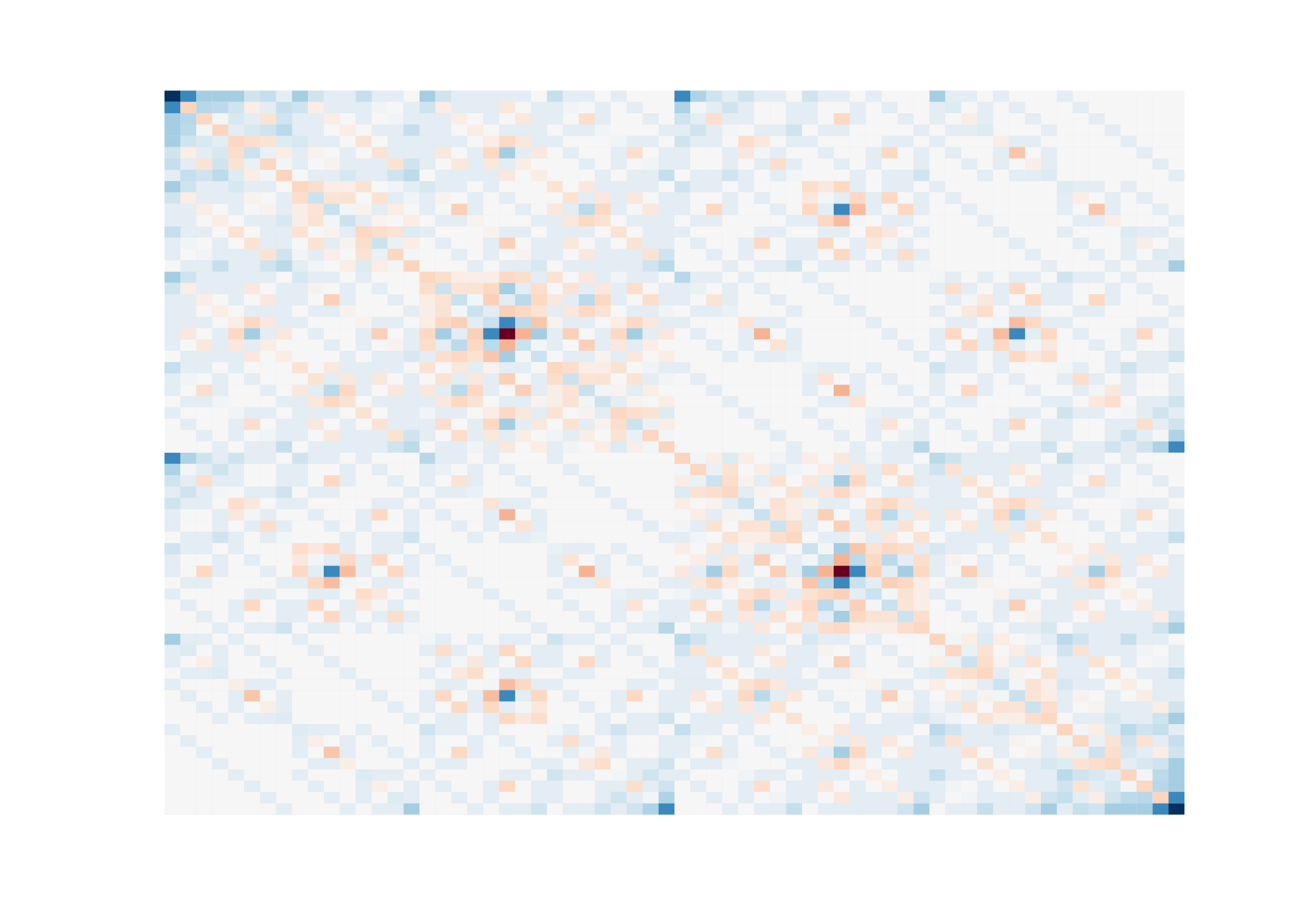}}
	\subfloat {\includegraphics[width=0.49\textwidth]{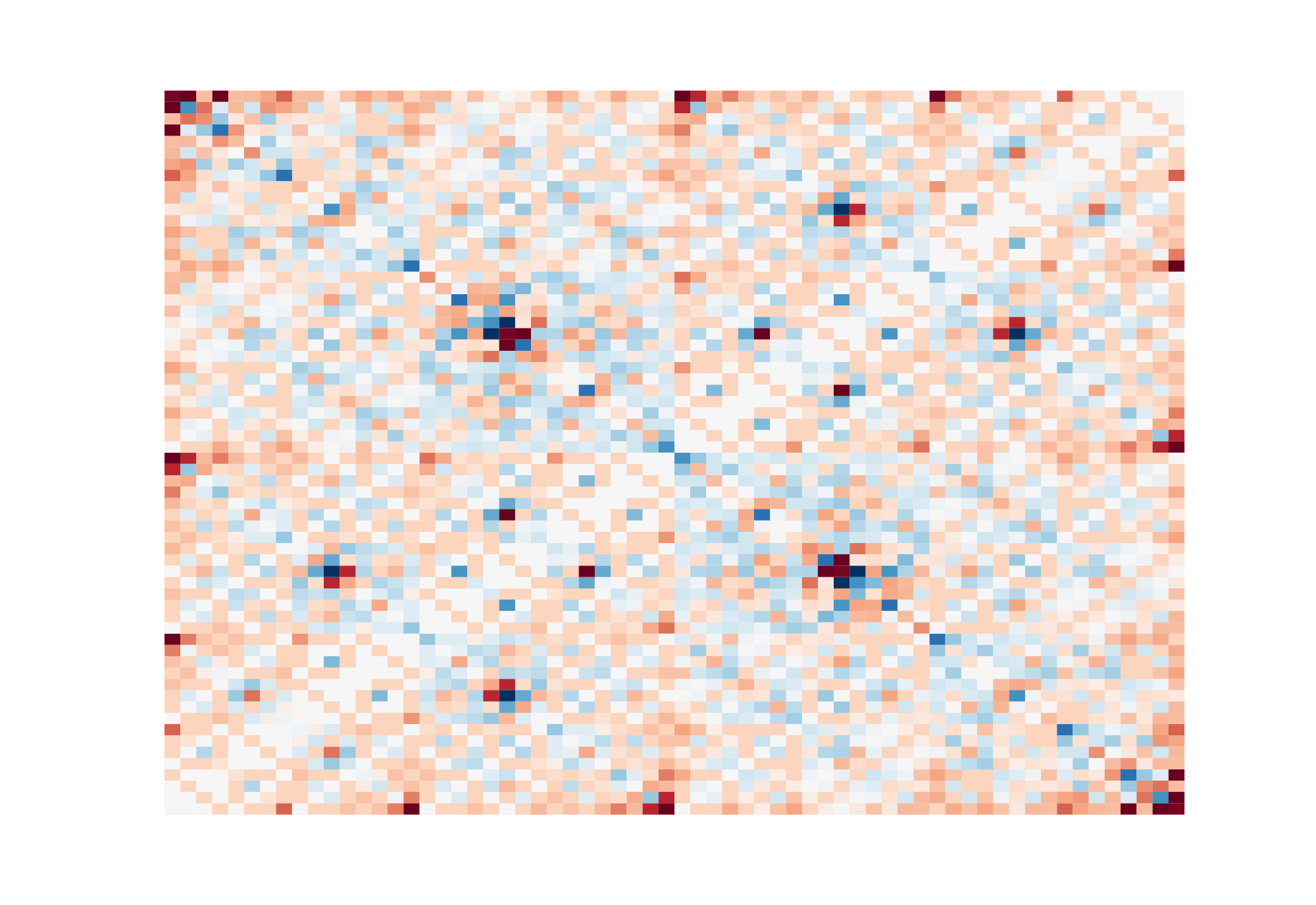}}
	
	\subfloat {\includegraphics[width=0.49\textwidth]{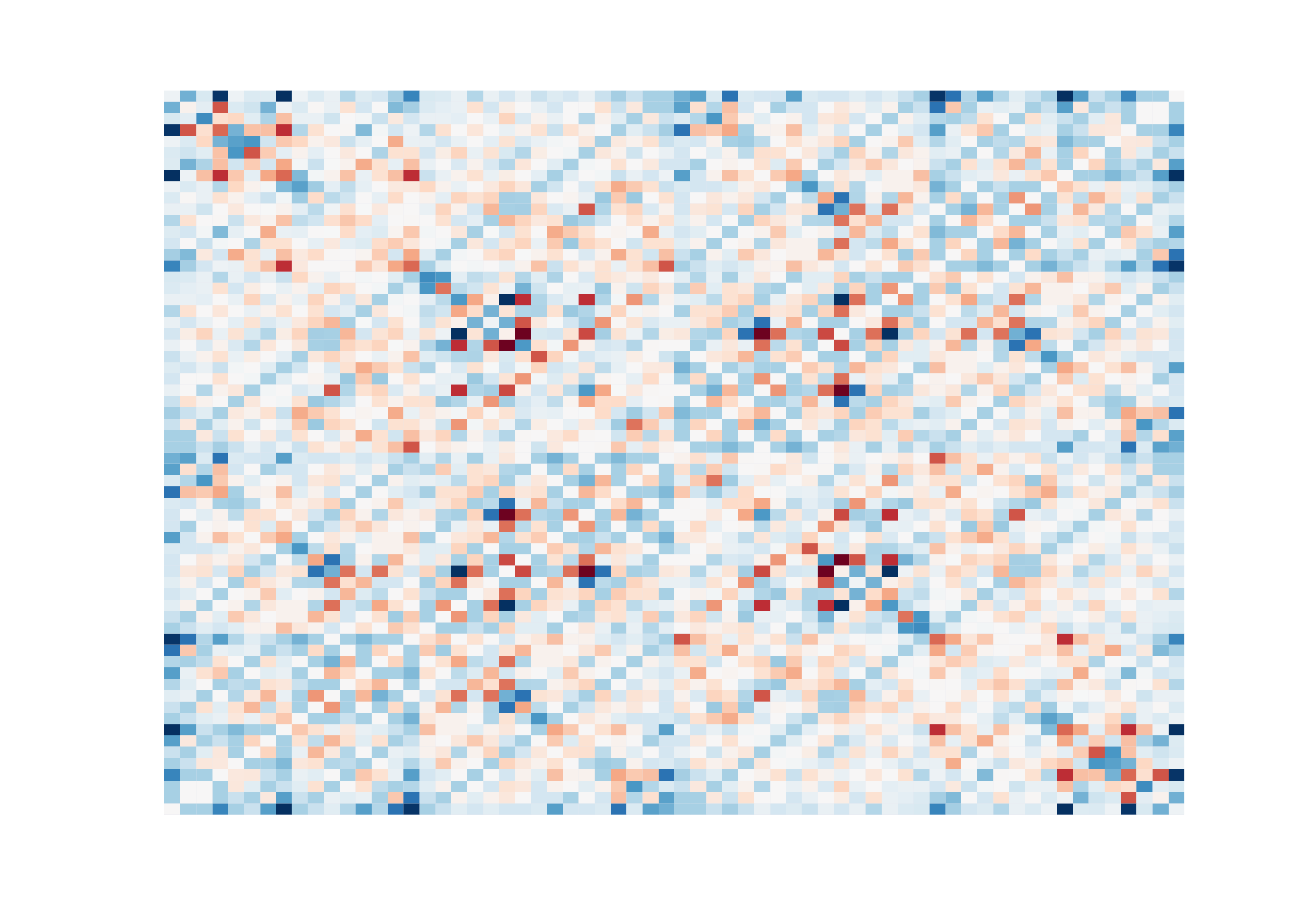}}
	\subfloat {\includegraphics[width=0.49\textwidth]{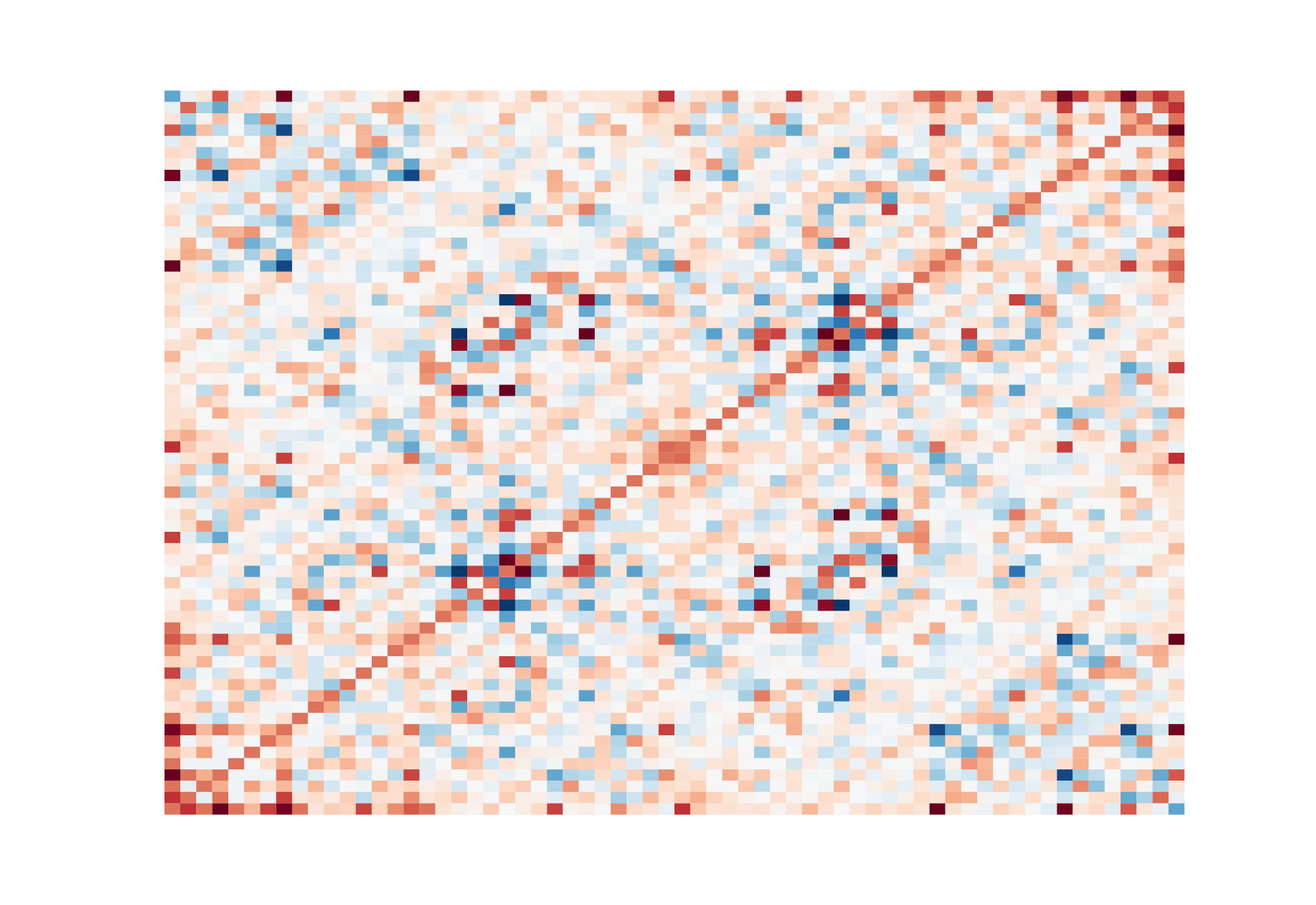}}
	\caption{Heatmaps of the first six basis matrices as computed by the algorithm without approximations for the transverse Ising Hamiltonian with $L=10$.}
\label{symm}
\end{figure}

To provide numerical evidence of the correctness of our statements in Sections~\ref{proofs} and~\ref{hamiltonians}, we will now state results obtained by conducting some numerical experiments for the well known Ising Hamiltonian with a transverse field. 
The Hamiltonian is given by
\[ 
H = J \sum_{i=1}^{L-1} I^{\otimes i-1} \otimes \sigma_x \otimes \sigma_x \otimes I^{\otimes L-(i+1)} + g \sum_{i=1}^{L} I^{\otimes i-1} \otimes \sigma_z \otimes I^{\otimes L - i}
\]
where $\sigma_{x,z}$ are again the Pauli matrices. As we have seen in Section~\ref{hamiltonians}, the transverse field Ising Hamiltonian clearly has the spectral property required to apply the improved version of the algorithm. 
It also has the additional advantage that it can be diagonalized analytically to obtain reference results.
Given a Hamiltonian, its thermal equilibrium, or Gibbs, state is described by
\[
\rho (\beta) = \frac{e^{-\beta H}}{Z}
\]
where $\beta$ is the inverse temperature and 
\[
Z = \trace e^{-\beta H}
\]
is the so called partition function or simply the normalization constant of the distribution. 
As our goal in this section is to compare both versions of the algorithm and not to provide physically relevant results, we will simply approximate $Z$ by chosing 
\[
f(H) = e^{-\beta H}
\]
where we set the scaling coefficients of the Hamiltonian and the inverse temperature to $J=g=\beta=1$.

To provide a thorough comparison between the vanilla, i.e.\ standard, and the improved version of our algorithm in terms of runtime and accuracy, we have conducted two sets of experiments. 
Firstly, we fixed the maximal bond dimension to be $D_{max}=50$ and computed the average runtime for one iteration of the algorithm over a run of 50 iterations for $L$, i.e., the system size, increasing from 10 to 100. 
Secondly, we set $L=50$ and increased $D_{max}$ from 10 to 100 and again computed the average runtime of one iteration over a run of 50 iterations. 
The comparison of the runtimes is depicted in Figure~\ref{fig:runtime}.

For both of these settings, we also evaluated the approximation accuracy as the relative error in $Z$ when we let the algorithm run until the relative difference between approximations results became smaller than $10^{-6}$.
These results are illustrated in Figure~\ref{fig:convergence}.
All results reported here were obtained for a C++ implementation of our algorithm on an Intel i5-5200U mobile CPU.

The results in Figure~\ref{fig:runtime} clearly show an advantage in runtime for the improved version of the algorith for all considered settings. 
On average over all conducted experiments this advantage is around 20\%, which seems like only a modest improvement but can easily amount to several hours of runtime less for large systems and large values of $D_{max}$. 
The results additionally illustrate the linear complexity in $L$ and cubic dependence on $D_{max}$ we have claimed in~\cite{august2017approximation} and which is not affected by the improvement introduced in this work.

In Figure~\ref{fig:convergence} we can furthermore observe that for the case of an input that exhibits the required spectral symmetry the accuracy of both versions of the algorithm is similar with slight advantages for the improved variant. 
This might be due to the fact that the unnecessarily computed partial results in the vanilla version of the algorithm are not exactly zero and hence introduce a small amount of additional error into the approximation.

Finally, in Figure~\ref{symm} we show heatmaps of the first six computed basis matrices in a run of the algorithm without approximations for $L=10$. 
While the first basis matrix is simply the scaled transverse field Ising Hamiltonian, the following matrices represent its orthogonalized powers.
Although this naturally does not constitute a rigorous argument, we can find by simple visual inspection that the basis matrices inherit the symmetric properties of the Hamiltonian, providing some intuition for the statements made in Section~\ref{proofs}.

\section{Conclusion}
\label{conclusion}

In this work we have tried to shed some more light on the analytic properties of the matrix product function approximation algorithm by analyzing the characteristics of the partial results computed during a full run.
As a result, we have found that the basis matrices as computed by the algorithm inherit a range of properties from the input matrix.
We have also seen that these properties then yield a more efficient version of the algorithm for a particular kind of input class, namely the class of matrices with point symmetric spectrum around zero.
We then went on to show for the application of quantum physics that a variety of spin Hamiltonians exhibits this spectral symmetry property and that hence in this field of application the discovered improvement can be successfully applied in many cases.
Finally, we demonstrated and verified our findings in numerical experiments conducted for the example of the Ising Hamiltonian with a transverse magnetic field.

While we were able to improve our understanding of the algorithm, more remains to be done, especially with respect to our understanding of the influence of the introduced truncation errors on the overall approximation accuracy. 
In addition to his, it would be interesting to see further applications of the algorithm outside of numerical quantum physics.
Another possible route of further research would be the exploration of possible combinations of our algorithm with other methods that approximate single extremal eigenvalues.
The approximations of extremal eigenvalues could be used to improve the accuracy of the approximation of the entire spectrum as computed by our algorithm.

\section{Acknowledgements}
We would like to thank the Elite Networks of Bavaria for partly funding this work via the doctoral programme \emph{Exploring Quantum Matter (ExQM)}. 
We would also like to thank Mari Carmen Ba\~{n}uls for being available for helpful discussions.

\nocite{*}
\bibliographystyle{siam}
\bibliography{ref}
\end{document}